\documentclass[10pt,twocolumn,twoside]{IEEEtran}
\usepackage{amsmath,amssymb,euscript ,yfonts,psfrag,latexsym,dsfont,graphicx,bbm,color,amstext,wasysym,subfig,parskip,pdfsync}
\usepackage{epstopdf}
\usepackage{flushend}
\usepackage{amsthm}
\usepackage{mathtools}
\usepackage{makecell}
\usepackage{graphicx} 
\usepackage{caption}
\usepackage{subcaption}
\usepackage{amsfonts}
\usepackage{algorithmic}
\usepackage{textcomp}
\usepackage{xcolor,mathdots}
\usepackage{hhline}
\usepackage{multirow}
\usepackage{float} 
\usepackage{url}
\usepackage{psfrag}

\newcounter{plm}

\newtheorem{proposition}[plm]{Proposition}

\begin{document}
\def\spacingset#1{\def\baselinestretch{#1}\small\normalsize}

\title{A general solution to the simultaneous stabilization problem by analytic interpolation}

\author{Yufang Cui,~\IEEEmembership{Student Member,~IEEE} and Anders Lindquist,~\IEEEmembership{Life Fellow,~IEEE}
\thanks{Y.\ Cui is with the Department of Automation, Shanghai Jiao Tong University, Shanghai, China; {email: cui-yufang@sjtu.edu.cn} and A.\ Lindquist is with the Department of  Automation and the School of Mathematical Sciences, Shanghai Jiao Tong University, Shanghai, China; {email: alq@kth.se}}
} 

\maketitle

\setlength{\parindent}{15pt}
\parskip 6pt
\def\spacingset#1{\def\baselinestretch{#1}\small\normalsize}
\newcommand{\E}{\operatorname{E}}
\newcommand{\bX}{\mathbf X}
\newcommand{\bH}{\mathbf H}
\newcommand{\bA}{\mathbf A}
\newcommand{\bB}{\mathbf B}
\newcommand{\bN}{\mathbf N}

\newcommand{\bHo}{{\stackrel{\circ}{\mathbf H}}}

\newcommand{\bHom}{{\stackrel{\circ\,\,}{{\mathbf H}_{t}}}{\hspace*{-13pt}\phantom{H}}^-}
\newcommand{\bHop}{{\stackrel{\circ\,\,}{{\mathbf H}_{t}}}{\hspace*{-13pt}\phantom{H}}^+}
\newcommand{\bHotwo}{{\stackrel{\circ\,\,\,\,\,}{{\mathbf H}_{t_2}}}{\hspace*{-16pt}\phantom{H}}^-}
\newcommand{\bHomn}{{\stackrel{\circ}{{\mathbf H}}}{\hspace*{-9pt}\phantom{H}}^-}
\newcommand{\bHopn}{{\stackrel{\circ}{{\mathbf H}}}{\hspace*{-9pt}\phantom{H}}^+}

\newcommand{\EbHom}{\E^{{\stackrel{\circ\,\,}{{\mathbf H}_{t}}}{\hspace*{-11pt}\phantom{H}}^-}}
\newcommand{\EbHop}{\E^{{\stackrel{\circ\,\,}{{\mathbf H}_{t}}}{\hspace*{-10pt}\phantom{H}}^+}}

\spacingset{.97}

\begin{abstract} 
	
In this paper, we tackle the significant challenge of simultaneous stabilization in control systems engineering, where the aim is to employ a single controller to ensure stability across multiple systems. We delve into both scalar and multivariable scenarios. For the scalar case, we present the  necessary and sufficient conditions for a single controller to stabilize multiple plants and reformulate these conditions to interpolation constraints, which expand Ghosh's results by allowing derivative constraints. Furthermore, we implement a methodology based on a Riccati-type matrix equation, called the Covariance Extension Equation. This approach  enables us to parameterize all potential solutions using a monic Schur polynomial. Consequently, we extend our result to the multivariable scenario and derive the necessary and sufficient conditions for a group of $m\times m$ plants to be simultaneously stabilizable, which can also be solved by our analytic interpolation method. Finally, we construct four numerical examples, showcasing the application of our method across various scenarios encountered in control systems engineering and highlighting its ability to stabilize diverse systems efficiently and reliably.

\end{abstract}


\newcommand{\mR}{{\mathbb R}}
\newcommand{\mZ}{{\mathbb Z}}
\newcommand{\mN}{{\mathbb N}}
\newcommand{\mE}{{\mathbb E}}
\newcommand{\mC}{{\mathbb C}}
\newcommand{\mD}{{\mathbb D}}
\newcommand{\bU}{{\mathbf U}}
\newcommand{\bW}{{\mathbf W}}
\newcommand{\cF}{{\mathcal F}}

\newcommand{\trace}{{\rm trace}}
\newcommand{\rank}{{\rm rank}}
\newcommand{\Real}{{\Re}e\,}
\newcommand{\half}{{\frac12}}

\spacingset{1}

\section{Introduction}\label{Introduction}

In order to introduce and analyze the problems considered in this paper we need the following notation.
\begin{align*}
&\mathbb{C} : \text{the complex plane} \\
&\mathbb{R} : \text{the real line} \\
&\mathbb{C}_{-} : \text{open left half of the complex plane} \\
&\mathbb{C}_{+}: \left[ \mathbb{C} - \mathbb{C}_{-} \right] \cup \{\infty\} \\
&\mathbb{D}: \text{interior of the unit disc, i.e. } \{z\mid |z|<1\}\\
&\mathbb{H} : \text{ring of proper rational functions with real}\\
&\qquad\text{co-efficients with poles in } \mathbb{C}_{-} \\
&\mathbb{H}^{p \times m} : \text{set of } p \times m \text{ matrices whose elements belong to } \mathbb{H} \\
&\mathbb{J} : \text{set of multiplicative units in } \mathbb{H} 
\end{align*}

In the field of control systems engineering, ensuring that multiple systems remain stable using only one controller is a significant and practical challenge \cite{Blondel, Nett, Chaoui}. This challenge is known as simultaneous stabilization, and it's an important area of study that has received considerable attention from both theoretical researchers and practitioners \cite{Smith, Wang,Minto,Saif}. 

The essence of simultaneous stabilization lies in developing a single controller capable of stabilizing a variety of plants \cite{Vidyasagar,Youla}. This paper addresses both Single-Input Single-Output (SISO) and Multiple-Input Multiple-Output (MIMO) scenarios. Specifically, for the SISO case, our objective is to find a suitable compensator $k(s)$ that stabilizes a family of SISO proper transfer functions $p_\lambda(s)$, which defined as
\begin{equation}\label{systems}
p_\lambda(s) = \frac{\lambda x_1(s) + (1 - \lambda) x_0(s)}{\lambda y_1(s) + (1 - \lambda) y_0(s)},
\end{equation}
where $\lambda \in [0,1]$ and $x_0(s), x_1(s), y_0(s), y_1(s) \in \mathbb{H}$. 

Expanding to the MIMO case, the problem involves finding an appropriate $m \times m$ compensator $K(s)$ that can stabilize a family of systems $P_\lambda(s)$, defined as:
\begin{equation}\label{MIMO} P_\lambda(s)=N_\lambda(s)D_\lambda(s)^{-1}
\end{equation}
where $\lambda\in[0,1]$ and 
\begin{equation}
    N_\lambda=\lambda N_1+(1-\lambda)N_0
\end{equation}
\begin{equation}
    D_\lambda=\lambda D_1+(1-\lambda)D_0
\end{equation}
where $N_0,N_1,D_0,D_1 \in \mathbb{H}^{m \times m}$.

The concept of simultaneous stabilization emerged as an extension of classical control theory, where the primary focus was on stabilizing single systems. The challenge of stabilizing multiple systems with a single controller was first addressed in the late 20th century.  Researchers like Vidyasagar, Saeks, and Murray \cite{Vidyasagar,Vidyasagar1,Youla,Saeks} provided early theoretical frameworks for simultaneous stabilization. Their work introduced critical concepts and conditions under which a single controller could stabilize multiple plants. This period saw the development of various methods \cite{Werner,Blondel1,Xu}, including algebraic criteria and geometric approaches, which significantly advanced the field.

BK Ghosh's work \cite{Ghosh,Ghosh2,Ghosh3} was pivotal in introducing interpolation methods to the problem of simultaneous stabilization. By recasting the problem into a Nevanlinna-Pick interpolation framework, Ghosh provided a new lens through which to view and solve stabilization challenges. 

In this paper, we build upon the foundational work of BK Ghosh, who effectively transformed the scalar case simultaneous stabilization problem into a Nevanlinna-Pick interpolation problem. We employ a more comprehensive interpolation approach based on our prior research on a Riccati-type method for analytic interpolation \cite{CLtac, CLcdc}, which is built upon algorithms for the partial stochastic realization problem \cite{b2,BGuL,b1,BLpartial} and on \cite{b6}. This approach  enables us to solve situations with derivative constraints and parameterize all potential solutions using a matrix polynomial.

The paper's structure is as follows. In Section~\ref{sec:3condition}, we present necessary and sufficient conditions for a family of SISO plants to be simultaneous stabilizable. Section~\ref{sec:interpolation} demonstrates the conversion of the SISO simultaneous stabilization problem into an analytic interpolation problem. In Section~\ref{sec:multivariable}, we outline the essential conditions
necessary for a group of MIMO plants to be simultaneously stabilizable and illustrate how the MIMO simultaneous stabilization problem can be converted into a multivariable analytic interpolation problem. Section~\ref{sec:CEE} focuses on solving the analytic interpolation problem using the Covariance Extension Equation (CEE). In Section~\ref{sec:applications}, we provide some
simulations to show how the method can be applied to
stabilize multiple plants. Finally, in Section~\ref{sec:conclusion} we give some
concluding remarks and suggestions for future research.

\section{Simultaneous stabilization in the scalar case}\label{sec:3condition}

To address the SISO simultaneous stabilization problem, we firstly consider a straightforward scenario involving two distinct plants characterized by coprime factorizations, denoted as follows:
\begin{equation}
\label{p0p1}
p_0(s)=\frac{x_0(s)}{y_0(s)}, \qquad p_1(s)=\frac{x_1(s)}{y_1(s)},
\end{equation}
where $x_{i}(s), y_{i}(s) \in\mathbb{H} $ and $y_{i}(s)$ is proper but not strictly proper. The objective is to identify a proper compensator capable of simultaneously stabilizing both $p_0$ and $p_1$.

\begin{proposition}\label{prop1}
The pair of distinct plants $p_0$ and $p_1$ is simultaneously stabilized by a proper compensator if and only if there exists $\delta_{0}(s), \delta_{1}(s) \in \mathbb{J}$, satisfying the following conditions:

(i)  If $x_{0} y_{1}-x_{1} y_{0}$ has zeros at $s_{1},\cdots, s_{t}$ in $\mathbb{C_+}$ with  multiplicities $m_{1}, \cdots, m_{t}$ respectively, then it is necessary that $s_{1},\cdots, s_{t}$ are also the zeros of $\delta_{0} y_{1}-\delta_{1} y_{0}$ and $\delta_{1} x_{0}-\delta_{0} x_{1}$ with multiplicities not less than $m_{1}, \cdots, m_{t}$, respectively.

(ii) If $x_{0} y_{1}-x_{1} y_{0}=0$ at $\infty$ with multiplicity $m_{\infty}$, then $\delta_{1} x_{0}-\delta_{0} x_{1}=0$ at $\infty$ with the same multiplicity $m_{\infty}$.
\end{proposition}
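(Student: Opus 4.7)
The plan is to leverage the standard Youla–Kucera parameterization of stabilizing compensators. A well-known result (Vidyasagar) states that if $p_i=x_i/y_i$ is a coprime factorization in $\mathbb{H}$ and $k=u/v$ is a coprime factorization of the compensator with $u,v\in\mathbb{H}$, then $k$ stabilizes $p_i$ if and only if $\delta_i:=y_iv+x_iu\in\mathbb{J}$. The proposition can therefore be reduced to a purely algebraic statement about the solvability of the $2\times 2$ system $\delta_0=y_0v+x_0u$, $\delta_1=y_1v+x_1u$ for $u,v\in\mathbb{H}$ with $v\in\mathbb{J}$ (the latter being what guarantees that $k=u/v$ is proper).

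The crucial algebraic step is Cramer's rule, which gives
\begin{equation*}
u(x_0y_1-x_1y_0)=\delta_0y_1-\delta_1y_0,\qquad v(x_0y_1-x_1y_0)=\delta_1x_0-\delta_0x_1.
\end{equation*}
For the sufficiency direction, I would assume $\delta_0,\delta_1\in\mathbb{J}$ satisfy (i) and (ii) and \emph{define} $u,v$ by the above formulas. The only candidate poles of $u,v$ are the $\mathbb{C}_+$ zeros of $x_0y_1-x_1y_0$. The finite ones $s_1,\dots,s_t$ are cancelled by (i), while the zero at $\infty$ is cancelled for $u$ by applying (i) at $s_i=\infty$ and, crucially for $v$, by (ii), which forces $\mathrm{ord}_\infty(v)=0$, i.e.\ $v\in\mathbb{J}$. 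Hence $u,v\in\mathbb{H}$, $k=u/v$ is proper, and a short computation back-substituting the identities verifies $y_iv+x_iu=\delta_i\in\mathbb{J}$, which by the Vidyasagar criterion gives simultaneous stabilization.

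For the necessity direction, given a proper compensator $k$ that stabilizes both plants, I would select a coprime factorization $k=u/v$ with $v\in\mathbb{J}$ (always available for proper $k$ by rescaling by a Hurwitz polynomial of appropriate degree). Setting $\delta_i:=y_iv+x_iu\in\mathbb{J}$, the two algebraic identities above force (i): since $u,v\in\mathbb{H}$ cannot have poles in $\mathbb{C}_+$, any finite $\mathbb{C}_+$ zero of $x_0y_1-x_1y_0$ must be matched in multiplicity by both $\delta_0y_1-\delta_1y_0$ and $\delta_1x_0-\delta_0x_1$. At infinity, $\mathrm{ord}_\infty(v)=0$ yields $\mathrm{ord}_\infty(\delta_1x_0-\delta_0x_1)=m_\infty$ exactly, which is (ii).

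The main subtlety to watch is the behavior at $\infty$: because $\mathbb{C}_+$ is defined to contain $\infty$, condition (i) applied there only delivers multiplicity $\geq m_\infty$ for both $\delta_0y_1-\delta_1y_0$ and $\delta_1x_0-\delta_0x_1$, which is precisely what is needed to put $u$ and $v$ in $\mathbb{H}$. Condition (ii) then strengthens the requirement on $\delta_1x_0-\delta_0x_1$ to \emph{equality}, and this is exactly what secures the properness of $k$ by forcing $v$ to be biproper. Conversely, the ability to always choose a coprime factorization with $v\in\mathbb{J}$ for a proper $k$ is what makes (ii) come out as equality rather than a mere inequality in the necessity direction.
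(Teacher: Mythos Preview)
Your proposal is correct and follows essentially the same route as the paper: both arguments reduce simultaneous stabilization to the Bezout-type identities $y_iv+x_iu=\delta_i\in\mathbb{J}$, apply Cramer's rule to obtain $u=(\delta_0y_1-\delta_1y_0)/(x_0y_1-x_1y_0)$ and $v=(\delta_1x_0-\delta_0x_1)/(x_0y_1-x_1y_0)$, and then read off conditions (i) and (ii) as exactly what is needed to put $u,v\in\mathbb{H}$ with $v$ biproper. One small slip: you repeatedly write that you can take $v\in\mathbb{J}$, but this is generally false (if $k$ has $\mathbb{C}_+$ poles, any coprime denominator $v\in\mathbb{H}$ must have the corresponding $\mathbb{C}_+$ zeros); what you actually use, and what the paper states, is only that $v$ can be chosen \emph{biproper} (proper but not strictly proper), i.e.\ $\mathrm{ord}_\infty(v)=0$, and this is what condition (ii) encodes.
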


\medskip

\begin{proof}

The proof of Proposition 1 is based on BK Ghosh's theory \cite{Ghosh,Ghosh2}. Denote the required  compensator by coprime factorization as 
\begin{equation}
\label{k}
k(s)=\frac{x_c(s)}{y_c(s)},
\end{equation}
where $x_{c}(s),y_{c}(s)\in \mathbb{H}$ and $y_{c}(s)$ is proper but not strictly proper. Consequently, the transfer function for the closed-loop system can be expressed as
\begin{equation}
G_{i}(s)=\frac{n_{i}(s)}{d_{i}(s)}=\frac{x_i(s)y_c(s)}{y_i(s)y_c(s)+x_i(s)x_c(s)},\quad i={0,1}
\end{equation}

Given that $x_i(s),x_c(s),y_i(s),y_c(s)\in \mathbb{H}$, indicating that their poles are situated in $\mathbb{C_-}$, it follows that the poles of $d_{i}(s)$ and $n_{i}(s)$  also reside in $\mathbb{C_-}$. To achieve the simultaneous stabilization of $p_0(s)$ and $p_1(s)$, all zeros of $d_{i}(s)$ must be located in $\mathbb{C_-}$.
	
	If  $p_i(s)$ is proper but not strictly proper, then the numerator $n_{i}(s)$ will have a non-zero value at $\infty$. In order for the function $G_{i}(s)$ to be proper, the denominator $d_{i}(s)$ must also have a non-zero value at $\infty$.

In the case where $x_i(s)/y_i(s)$ is strictly proper, the denominator $d_{i}(s)$ will have a non-zero value at $\infty$, resulting in $G_{i}(s)$ being strictly proper.

Hence, the simultaneous stabilization of $p_0$ and $p_1$ depends on the presence of $\delta_{0}$ and $\delta_{1}$ in  $\mathbb{J}$, such that
	\begin{equation}\label{cond1}
		x_{i}(s) x_{c}(s)+y_{i}(s) y_{c}(s)=\delta_{i}(s),\quad i={0,1}
	\end{equation}		
	By solving equation \eqref{cond1} for $x_{c}$ and $y_{c}$, we obtain
	\begin{equation}
		\label{8}
		\begin{aligned}
			&x_{c}(s)=(\delta_{0} y_{1}-\delta_{1} y_{0})/(x_{0} y_{1}-x_{1} y_{0}) \\
			&y_{c}(s)=(\delta_{1} x_{0}-\delta_{0} x_{1}) /(x_{0} y_{1}-x_{1} y_{0})
		\end{aligned}
	\end{equation}
	
	Condition (i) is  necessary and sufficient for $x_{c}(s)\in \mathbb{H}$ and $y_{c}(s)\in\mathbb{H}$. Condition (ii) is necessary and sufficient for $y_c(s)\in \mathbb{H}$ to be proper but not strictly proper. It is important to note that $x_c(\infty)/y_c(\infty)\neq\infty$, indicating that $x_{c}(s) / y_{c}(s)$ represents a proper rational function.
\end{proof}
\medskip
Next, we consider the broader scenario. Denote two distinct plants as \eqref{p0p1} and consider
\begin{equation}
	p_\lambda(s)=\frac{x_\lambda(s)}{y_\lambda(s)}=\frac{\lambda x_1(s)+(1-\lambda)x_0(s)}{\lambda y_1(s)+(1-\lambda)y_0(s)}
\end{equation}
where $\lambda \in[0,1]$. 

\begin{proposition}

A proper compensator can stabilize the set of plants represented by $p_\lambda(s)$ for $\lambda\in[0,1]$  if and only if there are  $\delta_{0} \in \mathbb{J}$ and $\delta_{1}\in\mathbb{J}$  that satisfy the conditions described in Proposition 1, along with an extra condition showed below.
\end{proposition}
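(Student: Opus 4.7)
The plan is to reduce Proposition~2 to Proposition~1 by exploiting the linearity of $x_\lambda$ and $y_\lambda$ in $\lambda$. Suppose the compensator $k(s)=x_c(s)/y_c(s)$ (with $x_c,y_c\in\mathbb{H}$ coprime) stabilizes every member of the family $p_\lambda$, $\lambda\in[0,1]$. Specializing to $\lambda=0$ and $\lambda=1$ already returns us to the setting of Proposition~1 applied to $p_0$ and $p_1$: there must exist $\delta_0,\delta_1\in\mathbb{J}$ with
\begin{equation*}
x_i x_c + y_i y_c = \delta_i,\qquad i=0,1,
\end{equation*}
satisfying conditions (i) and (ii) of Proposition~1. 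The new information comes from $\lambda\in(0,1)$.

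For an intermediate $\lambda$, I would note that
\begin{equation*}
x_\lambda x_c + y_\lambda y_c = \lambda(x_1 x_c+y_1 y_c)+(1-\lambda)(x_0 x_c+y_0 y_c) = \lambda\delta_1+(1-\lambda)\delta_0 =: \delta_\lambda,
\end{equation*}
so the closed-loop Bezout element for $p_\lambda$ is exactly the convex combination $\delta_\lambda$. Arguing as in Proposition~1, the compensator stabilizes $p_\lambda$ if and only if $\delta_\lambda\in\mathbb{J}$, i.e.\ $\delta_\lambda$ is a unit of $\mathbb{H}$. Since $\delta_0,\delta_1\in\mathbb{J}$ already have their zeros in $\mathbb{C}_-$ and are nonvanishing at $\infty$, the only way the family $\{\delta_\lambda\}$ can fail to stay in $\mathbb{J}$ is for some $\delta_\lambda$ to acquire a zero in $\mathbb{C}_+=(\mathbb{C}-\mathbb{C}_-)\cup\{\infty\}$. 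Hence the \emph{extra condition} is that
\begin{equation*}
\delta_\lambda(s) = \lambda\delta_1(s)+(1-\lambda)\delta_0(s)\neq 0\quad\text{for every } s\in\mathbb{C}_+ \text{ and every } \lambda\in[0,1],
\end{equation*}
which, since $\delta_0(s),\delta_1(s)\neq 0$ on $\mathbb{C}_+$, is equivalent to requiring the ratio $\delta_1(s)/\delta_0(s)$ to avoid the nonpositive real axis $(-\infty,0]$ for every $s\in\mathbb{C}_+$. This is the natural convexity/avoidance-of-the-negative-axis condition familiar from Ghosh's framework, and it is straightforwardly seen to be both necessary (by the argument above) and sufficient (run it in reverse: given $\delta_0,\delta_1$ as stated, define $x_c,y_c$ by \eqref{8}; Proposition~1 guarantees $x_c,y_c\in\mathbb{H}$ with $y_c$ proper nonstrictly proper, and the avoidance condition guarantees $\delta_\lambda\in\mathbb{J}$ for all $\lambda$).

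The routine part is the linearity identity and the equivalence ``$\delta_\lambda\in\mathbb{J}$ for all $\lambda\iff $ the compensator stabilizes $p_\lambda$ for all $\lambda$''. The genuine obstacle is identifying and phrasing the extra condition in a usable way: it must be checkable without quantifying over the entire interval $[0,1]$, and it must be compatible with condition (ii) of Proposition~1 at $\infty$ (so that $\delta_\lambda(\infty)\neq 0$ is not lost in the convex combination). The reformulation in terms of the image of $\delta_1/\delta_0$ over $\mathbb{C}_+$ avoiding $(-\infty,0]$ is the cleanest way to do so, and it is what subsequently enables the translation into an analytic interpolation problem in Section~\ref{sec:interpolation}.
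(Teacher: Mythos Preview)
Your proposal is correct and follows essentially the same route as the paper's proof: both derive $\delta_\lambda=\lambda\delta_1+(1-\lambda)\delta_0$ from linearity, reduce stabilization of $p_\lambda$ to $\delta_\lambda\in\mathbb{J}$, and then rewrite the nonvanishing condition on $\mathbb{C}_+$ as the requirement that $\delta_1/\delta_0$ miss the nonpositive real axis. Your treatment is, if anything, slightly more explicit than the paper's about the sufficiency direction (constructing $x_c,y_c$ via \eqref{8} and invoking Proposition~1), whereas the paper leaves that implicit.
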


Condition (iii): $\delta_{1}/\delta_{0}$ should not have any intersection with the nonpositive real axis, including $\infty$ at any point in $\mathbb{C_+}$.

\medskip

\begin{proof}
To stabilize the two plants \eqref{p0p1} simultaneously, the compensator \eqref{k} must meet conditions (i) and (ii). Furthermore, for \eqref{k} to stabilize all other plants $x_{\lambda} / y_{\lambda}$ simultaneously, it is necessary and sufficient that there are $\delta_{\lambda}\in\mathbb{J}$ , for $\lambda\in (0,1)$ such that the equation
\begin{equation}\label{cond2}
x_{c} x_{\lambda}+y_{c} y_{\lambda}=\delta_{\lambda}
\end{equation}
holds. The combination of \eqref{cond1} and \eqref{cond2} yields
\begin{equation}
\lambda\delta_1+(1-\lambda)\delta_{0}=\delta_{\lambda},\lambda\in(0,1)
\end{equation}
Given that the poles of $\delta_{0}$ and $\delta_{1}$ reside in $\mathbb{C_-}$, it follows that the poles of $\delta_{\lambda}$ also belong to $\mathbb{C_-}$. To ensure $\delta_\lambda$ is in $\mathbb{J}$, all zeros of $\delta_\lambda$ must situate in $\mathbb{C_-}$.
From
	\begin{equation}
		\lambda\delta_1+(1-\lambda)\delta_{0}=\delta_{\lambda}=0
	\end{equation}
	we get 
	\begin{equation}
		\frac{\delta_{1}}{\delta_{0}}=1-\frac{1}{\lambda} \in (-\infty,0), \text{for~} \lambda \in (0,1)
	\end{equation}
	
	If $\delta_{1}/\delta_{0}$ intersects  $(-\infty,0)$ at a point in $\mathbb{C_+}$ (denoted as $\hat{s}$), then there exists a  $\lambda$ within the range of $(0,1)$ that satisfies
	\begin{equation}
		\frac{\delta_{1}}{\delta_{0}}(\hat{s})=1-\frac{1}{\lambda}
	\end{equation}
	and 
	\begin{equation}
		\delta_{\lambda}(\hat{s})=0
	\end{equation}
	This implies that $\delta_{\lambda}$ is not in $\mathbb{J}$, which contradicts the requirement that all $\delta_{\lambda}$ belong to $\mathbb{J}$. Therefore, for $\delta_\lambda\in\mathbb{J}, \lambda\in(0,1)$ to hold, it is necessary that $\delta_1/\delta_0$ does not have any intersection with  $(-\infty,0)$ at $\mathbb{C_+}$. Furthermore, since $\delta_0\in\mathbb{J}$ and $\delta_1\in \mathbb{J}$, then Proposition 2 follows.	
\end{proof}

\section{Transformation to the analytic interpolation problem}\label{sec:interpolation}

In this section, we demonstrate the process of converting the SISO simultaneous stabilization problem into an analytic interpolation problem,  which is defined in the following way.  Find a real rational Carath\'eodory function $F$ of size $\ell\times \ell$, i.e., a function $F$ that is analytic in $\mathbb{D}$, and satisfies the inequality condition
\begin{equation}
\label{F+F*}
F(z)+F(z)^{*}  > 0, \quad z\in\mathbb{D},
\end{equation}
and also fulfills the interpolation conditions
\begin{align}
\label{interpolation}
 \frac{1}{j!}F^{(j)}(z_{k})=W_{kj},\quad& k=1,\cdots,m,   \\
    &   j=0,\cdots n_{k}-1 ,\notag
\end{align}
where $'$ denotes transposition, $F^{(j)}(z)$ is the $j$th derivative of $F(z)$, and $z_1,\dots,z_m$ are distinct points in $\mathbb{D}$ and $W_{kj}\in\mathbb{C}^{\ell\times\ell}$ for each $(k,j)$. The complexity of the rational function $F(z)$ is constrained by limiting its McMillan degree to be at most $\ell n$, where
\begin{equation}
\label{deg(f)}
n=\sum_{k=1}^{m}n_k -1 .
\end{equation}

Firstly, the conditions discussed in Proposition~\ref{prop1} can be reformulated as interpolation conditions showed below. 
\begin{proposition}
	If $s_{j}$ is a zero of  $x_{0} y_{1}-x_{1} y_{0}$ in $\mathbb{C_+}$ with multiplicity $n+1$, while not a zero of $y_0$ and $y_1$, or $x_0$ and $x_1$, then Proposition \ref{prop1} means that the $i$-th derivative of $\delta_{1}(s) / \delta_{0}(s)$ satisfying the following interpolation constraints
	\begin{equation}
		\label{prop3}
		(\frac{\delta_{1}}{\delta_{0}})^{(i)}(s_j)=(\frac{y_{1}}{y_{0}})^{(i)}(s_j)
	\end{equation}
	where $\delta_{1}, \delta_{0} \in \mathbb{J}$, $i=0,\cdots,n$.
\end{proposition}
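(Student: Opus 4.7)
The plan is to translate the zero-multiplicity condition from Proposition~\ref{prop1}(i) into a statement about matching Taylor coefficients at $s_j$. By that condition, the hypothesis that $s_j$ is a zero of $x_0 y_1 - x_1 y_0$ with multiplicity $n+1$ forces $\delta_0 y_1 - \delta_1 y_0$ to have a zero at $s_j$ of multiplicity at least $n+1$. Equivalently, I can write
\[
\delta_0(s) y_1(s) - \delta_1(s) y_0(s) = (s - s_j)^{n+1} h(s)
\]
for some function $h$ that is analytic in a neighborhood of $s_j$.

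Next I would divide both sides by $y_0(s)\,\delta_0(s)$, obtaining
\[
\frac{y_1(s)}{y_0(s)} - \frac{\delta_1(s)}{\delta_0(s)} \;=\; \frac{(s-s_j)^{n+1} h(s)}{y_0(s)\,\delta_0(s)}.
\]
The crucial step is verifying that the right-hand side is analytic at $s_j$ and still vanishes there to order at least $n+1$. This uses two facts: $y_0(s_j) \neq 0$ by the non-common-zero hypothesis on the $y_i$'s (the symmetric case, where $y_0$ vanishes but the $x_i$'s do not share $s_j$ as a common zero, would be handled by the parallel identity obtained from $\delta_1 x_0 - \delta_0 x_1$); and $\delta_0(s_j) \neq 0$ because $\delta_0 \in \mathbb{J}$ is a multiplicative unit in $\mathbb{H}$, so $\delta_0^{-1} \in \mathbb{H}$ has no pole at any point of $\mathbb{C}_+$, forcing $\delta_0(s_j)$ to be finite and nonzero.

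From here the conclusion is essentially definitional: a function that is analytic at $s_j$ has a zero there of multiplicity at least $n+1$ if and only if its derivatives of orders $0,1,\dots,n$ all vanish at $s_j$. Applying this to $\frac{y_1}{y_0} - \frac{\delta_1}{\delta_0}$ immediately yields the interpolation constraints (\ref{prop3}). I do not anticipate a substantive obstacle; the only real care needed is the bookkeeping in the division step to ensure that neither $y_0$ nor $\delta_0$ contributes a pole or zero at $s_j$ that could alter the vanishing order, and to record how the symmetric hypothesis on $x_0, x_1$ covers the residual case where $y_0(s_j)$ or $y_1(s_j)$ does vanish.
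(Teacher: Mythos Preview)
Your argument is correct and reaches the conclusion by a cleaner route than the paper's. The paper also starts from $(\delta_0 y_1-\delta_1 y_0)^{(i)}(s_j)=0$, but then passes to $(y_1-\tfrac{\delta_1}{\delta_0}y_0)^{(i)}(s_j)=0$ and unwinds this via the Leibniz product rule together with an induction on $i$, solving at each step for $(\tfrac{\delta_1}{\delta_0})^{(i)}(s_j)$ in terms of lower-order data and matching it to $(\tfrac{y_1}{y_0})^{(i)}(s_j)$. Your approach bypasses the Leibniz bookkeeping entirely: by dividing through by $y_0\delta_0$ (nonvanishing at $s_j$ because $\delta_0\in\mathbb{J}$ and $y_0(s_j)\neq 0$), you exhibit $\tfrac{y_1}{y_0}-\tfrac{\delta_1}{\delta_0}$ directly as $(s-s_j)^{n+1}$ times an analytic function, and then invoke the standard equivalence between order of vanishing and vanishing of derivatives. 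This is shorter and arguably more transparent; the paper's inductive computation has the minor advantage of making explicit how each successive derivative constraint emerges, and it records the companion identity $(\tfrac{\delta_1}{\delta_0})^{(i)}(s_j)=(\tfrac{x_1}{x_0})^{(i)}(s_j)$ at the end, which you correctly flag as the tool for the residual case where $y_0(s_j)=0$.
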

\medskip
\begin{proof}
	We first introduce the Leibniz formula \cite{Olver}, which states that if $u$ and $v$ are n-times differentiable functions, then the product $uv$ is also n-times differentiable and the n-th derivative is 
	\begin{equation}
		(u v)^{(n)}=\sum_{k=0}^n C_n^k u^{(n-k)} v^{(k)},
	\end{equation}
 
 Since $s_{j}$ is a zero of $x_{0} y_{1}-x_{1} y_{0}$ of multiplicity $n+1$, 
	\begin{equation}
		\label{xy}
		(x_{0} y_{1}-x_{1} y_{0})^{(i)}(s_j)=0,\quad i=0,\cdots n.
	\end{equation}
	
	By Proposition \ref{prop1}, we need to have
	\begin{equation}
		\label{derivative}
		(\delta_{0}y_1-\delta_{1}y_0)^{(i)}(s_j)=0,\quad i=0,\cdots n
	\end{equation}
	
	which is equivalent to
	\begin{equation}
		\label{equal}
		(y_1-\frac{\delta_{1}}{\delta_{0}}y_0)^{(i)}(s_j)=0,\quad i=0,\cdots n.
	\end{equation}
	
	By Leibniz formula, \eqref{equal} implies
	\begin{equation}
		(y_1)^{(i)}(s_j)-\sum_{k=0}^{i} C_i^k (\frac{\delta_{1}}{\delta_{0}})^{(i-k)}(s_j) y_0^{(k)}(s_j)=0
	\end{equation}
	for $i=0,\cdots,n$. Thus, if $n=0$,
	\begin{equation}
		\frac{\delta_{1}}{\delta_{0}}(s_j)=\frac{y_{1}}{y_{0}}(s_j),
	\end{equation}
	Assume that \eqref{prop3} holds for $i=0,\cdots,n-1$ with multiplicity $n$. Then for multiplicity $n+1$, we need an extra constraint
	\begin{equation}
		(y_1-\frac{\delta_{1}}{\delta_{0}}y_0)^{(n)}(s_j)=0
	\end{equation}
	which is 
	\begin{equation}
		(y_1)^{(n)}-\sum_{k=1}^{n} C_n^k (\frac{y_{1}}{y_{0}})^{(n-k)} y_0^{(k)}-(\frac{\delta_{1}}{\delta_{0}})^{(n)}y_0=0
	\end{equation}
	at $s_j$. By Leibniz formula,
	\begin{equation}
		(y_1)^{(n)}-(y_1)^{(n)}+(\frac{y_1}{y_0})^{(n)}y_0-(\frac{\delta_{1}}{\delta_{0}})^{(n)}y_0=0
	\end{equation}
	at $s_j$, yielding
	\begin{equation}
		(\frac{\delta_{1}}{\delta_{0}})^{(n)}(s_j)=(\frac{y_1}{y_0})^{(n)}(s_j).
	\end{equation}
	Then, Proposition 3 can be derived using mathematical induction.
	Similarly, \eqref{xy} means
	\begin{equation}
		(\frac{x_{1}}{x_{0}})^{(i)}(s_j)=(\frac{y_1}{y_0})^{(i)}(s_j)\quad i=0,1,\cdots,n,
	\end{equation}
	so
	\begin{equation}
		(\frac{\delta_{1}}{\delta_{0}})^{(i)}(s_j)=(\frac{y_1}{y_0})^{(i)}(s_j)=(\frac{x_{1}}{x_{0}})^{(i)}(s_j),
	\end{equation}
	for $i=0,\cdots,n$ which concludes the proof
\end{proof}
\medskip
If $s_{j}$ is a zero of $ y_{1}$ and $ y_{0}$, or $ x_{1}$ and $ x_{0}$ with a certain multiplicity, then the interpolation constraints in Proposition 3 need change. For instance, if $s_{j}$ is a zero of $ x_{0}$ and $x_{1}$ with multiplicity 1, but not a zero of $y_0$ and $y_1$, then it is required that $\delta_{1} / \delta_{0}$ interpolates  $(s_{j},(y_{1} / y_{0})(s_{j))}$.

Proposition 2 implies that  $\frac{\delta_{1}}{\delta_{0}}$ maps $\mathbb{C_+}$ to $re^{i\theta}$, where $r\in (0,\infty)$  and $\theta\in(-\pi,\pi)$.

In order to convert to analytic interpolation problem, define
\[ F(s):=\sqrt{\frac{\delta_{1}}{\delta_{0}}} \]
which maps  $\mathbb{C_+}$ to the open right half-plane, i.e., to $\sqrt{r}e^{i\theta/2}$.

By employing the Möbius transformation given by 
\begin{equation}\label{mob}
    z=(1-s)(1+s)^{-1}
\end{equation}
 which maps $\mathbb{C_+}$ into $\mathbb{D}$, we set
\begin{equation}
	f(z):=F((1-z)(1+z)^{-1})
\end{equation}
then the problem becomes identifying a Carathéodory function $f(z)$ that fulfills specific interpolation constraints. This is an analytic interpolation problem, after solving $f(z)$, the compensator $k(s)$ can be determined by the following procedures.
\begin{equation}
	F(s)=f((1-s)(1+s)^{-1})
\end{equation}
\begin{equation}
	k(s)=\frac{F^2x_0-x_1}{y_1-F^2x_1}
\end{equation}

Overall, there exist an infinite number of solutions to this problem. However, as detailed in Section~\ref{sec:CEE}, these solutions can be fully characterized by a parameter denoted as $\Sigma(z)$. By freely selecting $\Sigma(z)$, we have the flexibility to adjust the solution according to specific requirements.

\section{Simultaneous stabilization in the multivariable case}\label{sec:multivariable}
In this section, we will generalize our results from SISO simultaneous stabilization problem to multivariable case. As explained in \cite{Ghosh,Ghosh2,Ghosh3}, every SISO system can be written as $x(s) / y(s)$, where $x(s), y(s) \in \mathbb{H}$, and an $m \times m$  plant $P(s)$ has the left coprime representation  $D_l(s)^{-1} N_l(s)$ and the right coprime representation $N_r(s)D_r(s)^{-1}$, where $N_l(s),D_l(s),N_r(s),D_r(s) \in \mathbb{H}^{m \times m}$.

Similar to SISO case, we firstly consider a simple case: Given two different plants 
\begin{equation}
  P_i(s)= N_i(s)D_i(s)^{-1}, \quad i=0, 1  
\end{equation}
where $N_i(s) \in \mathbb{H}^{m \times m}, D_i(s) \in \mathbb{H}^{m \times m}$, for $i=0,1$, find a proper $m\times m$ compensator $K(s)$ which can stabilize $P_0(s)$ and $P_1(s)$.

Set
\begin{equation}
  M(s)=
    \begin{bmatrix}
	N_0(s) & N_1(s) \\
	D_0(s) & D_1(s)
\end{bmatrix},
\end{equation}
and let $\text{Adj}(M(s))$ be its adjoint matrix. Assume generically that  $\det(M(s))$ has simple zeros in $\mathbb{C_+}$ at $s_1,\cdots,s_t$ and $\det(M(\infty))\neq 0$.
\begin{proposition}
	The two plants $P_0,P_1$ can be simultaneously stabilized by a proper compensator if and only if there exists $\Delta_i(s) \in \mathbb{H}^{m \times m}$, $\operatorname{det} \Delta_i(s) \in \mathbb{J}, i=0, 1$ , such that if $s_{1},s_2$, $\cdots, s_{t}$ are the zeros of $\det(M)$ in $\mathbb{C_+}$, then 
	\begin{equation}
	    \begin{bmatrix}
		\Delta_0(s)&\Delta_1(s)
	\end{bmatrix}\text{Adj}(M(s))=\mathbf{0}
	\end{equation}
	at $s_{1},s_2$, $\cdots, s_{t}$.	
\end{proposition}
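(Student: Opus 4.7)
The plan is to mirror the scalar proof of Proposition 1 in the matrix setting. I would represent the desired compensator by a left coprime factorization $K(s) = Y_c(s)^{-1} X_c(s)$ with $X_c, Y_c \in \mathbb{H}^{m\times m}$ and $Y_c$ biproper, while the plants are given in right coprime form $P_i = N_i D_i^{-1}$. The standard multivariable fact that internal stability of the feedback interconnection $(P_i, K)$ is equivalent to $\Delta_i := X_c N_i + Y_c D_i$ being a unit of $\mathbb{H}^{m\times m}$, i.e.\ $\det\Delta_i \in \mathbb{J}$, then reduces simultaneous stabilization of $P_0$ and $P_1$ to the existence of units $\Delta_0, \Delta_1$ satisfying
\begin{equation*}
[X_c(s) \;\; Y_c(s)]\, M(s) = [\Delta_0(s) \;\; \Delta_1(s)],
\end{equation*}
which is precisely Ghosh's Bezout equation written in the block notation of the statement.

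Next, I would invert $M$ by Cramer's rule (which is legitimate since coprimeness forces $\det M \not\equiv 0$), obtaining
\begin{equation*}
[X_c(s) \;\; Y_c(s)] = \frac{1}{\det M(s)}\, [\Delta_0(s) \;\; \Delta_1(s)]\, \text{Adj}(M(s)).
\end{equation*}
Because the only zeros of $\det M$ in $\mathbb{C}_+$ are the (generically simple) points $s_1,\dots,s_t$ and the numerator entries lie in $\mathbb{H}$, the quotient belongs to $\mathbb{H}^{m\times 2m}$ if and only if each such zero is cancelled, that is,
\begin{equation*}
[\Delta_0(s_j) \;\; \Delta_1(s_j)]\, \text{Adj}(M(s_j)) = \mathbf{0}, \quad j = 1,\dots,t,
\end{equation*}
which is the condition in the proposition. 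Properness of the recovered $K$ amounts to biproperness of $Y_c$, which follows by evaluating the displayed formula at $s=\infty$ together with the hypothesis $\det M(\infty)\neq 0$. The converse is immediate: given units $\Delta_0,\Delta_1$ satisfying the vanishing condition, the same formula produces admissible $X_c, Y_c$, and reversing the Bezout computation shows that $K = Y_c^{-1} X_c$ simultaneously stabilizes $P_0$ and $P_1$.

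The main obstacle I anticipate is not the algebra, which is a direct transcription of the scalar argument, but the auxiliary verifications peculiar to the multivariable setting. Specifically, one has to confirm that the unit-Bezout characterization really captures \emph{internal} (not merely input–output) stability of the MIMO feedback loop, and track properness of the compensator through the behaviour of $\text{Adj}(M)$ and $\det M$ at infinity. The generic simple-zero assumption sidesteps the more delicate issue of higher-multiplicity zeros of $\det M$, which would otherwise force derivative-type cancellation conditions on $[\Delta_0 \;\; \Delta_1]\text{Adj}(M)$ in the spirit of the matrix analogue of Proposition 3.
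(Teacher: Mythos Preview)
Your proposal is correct and follows essentially the same route as the paper: represent the compensator by a left coprime factorization, invoke the Bezout-type characterization $X_cN_i+Y_cD_i=\Delta_i$ with $\det\Delta_i\in\mathbb{J}$ (citing Ghosh), write this as $[X_c\;Y_c]M=[\Delta_0\;\Delta_1]$, invert via $\text{Adj}(M)/\det M$, and read off the cancellation condition at the simple zeros $s_1,\dots,s_t$ of $\det M$ in $\mathbb{C}_+$. If anything, your sketch is slightly more complete than the paper's, since you explicitly address properness at $\infty$ and the converse direction, which the paper leaves implicit.
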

\begin{proof}
 Let us represent the compensator as
	\begin{equation}
	    K(s)=D_c(s)^{-1}N_c(s) 
	\end{equation}
	where $N_c(s) \in \mathbb{H}^{m \times m}, D_c(s) \in \mathbb{H}^{m \times m}, N_c(s), D_c(s)$ are coprime. Then the compensator stabilizes $P_0(s),P_1(s)$ if and only if
 \begin{equation}
     \quad N_c(s)N_i(s) +D_c(s)D_i(s) =\Delta_i(s)
 \end{equation}
	for some $\Delta_i(s) \in \mathbb{H}^{m \times m}$, $\operatorname{det} \Delta_i(s)\in \mathbb{J}, i=0, 1$ respectively \cite{Ghosh3}. 
	We can write this in matrix form as
	\begin{equation}
	    \begin{bmatrix}
	        N_c&D_c
	    \end{bmatrix}
		\begin{bmatrix}
		    N_0(s) & N_1(s) \\
		D_0(s) & D_1(s)
		\end{bmatrix}
	=\begin{bmatrix}
		\Delta_0(s) ~\Delta_1(s)
	\end{bmatrix},
	\end{equation}
   and  then
	 \begin{equation}
	     \begin{bmatrix}
	        N_c&D_c
	    \end{bmatrix}=\begin{bmatrix}
		\Delta_0(s) ~\Delta_1(s)
	\end{bmatrix}\frac{\text{Adj}(M(s))}{\det(M(s))}.
	 \end{equation}
	 In order to ensure that $N_c(s) \in \mathbb{H}^{m \times m}, D_c(s) \in \mathbb{H}^{m \times m}$, it is necessary and sufficient that 
	\begin{equation}
		\label{matrx_cond}
			[\begin{matrix}
			\Delta_0(s)~\Delta_1(s)
		\end{matrix}]\text{Adj}(M(s))=\mathbf{0}		
	\end{equation}
at $s_{1},s_2$, $\cdots, s_{t}$, where $s_{1},s_2$, $\cdots, s_{t}$  are the simple zeros of $\det(M)$ in $\mathbb{C_+}$.
\end{proof}

Assume generically that $\text{Adj}(M(s_i))$ are of rank 1 at $s_{1},s_2$, $\cdots, s_{t}$, If $\text{Adj}(M(s_i))$ are spanned by a column vector $\mathbf{r_i}=[\mathbf{r_{i1}},\mathbf{r_{i2}}]'$, then
\begin{equation}\label{delta}
    \Delta_0(s_i)\mathbf{r_{i1}}'+\Delta_1(s_i)\mathbf{r_{i2}}'=\mathbf{0}
\end{equation}
for $i=1, \cdots, t$. From equation \eqref{delta}, we can derive $(\Delta_0(s_i))^{-1}\Delta_1(s_i)$.

Now we consider the more general case, suppose
\begin{equation}
    P_\lambda(s)=N_\lambda(s)D_\lambda(s)^{-1}
\end{equation}
where $\lambda\in[0,1]$ and 
\begin{equation}
    N_\lambda=\lambda N_1+(1-\lambda)N_0
\end{equation}
\begin{equation}
    D_\lambda=\lambda D_1+(1-\lambda)D_0
\end{equation}

\begin{proposition}

If $(\Delta_0)^{-1}\Delta_1$ is diagonalizable at $s\in \mathbb{C}_+$, then the set of plants $P_\lambda(s)$ for $\lambda\in[0,1]$ can be simultaneously stabilized by a proper compensator if and only if there exist $\Delta_i(s) \in \mathbb{H}^{m \times m}$, with $\operatorname{det} \Delta_i(s) \in \mathbb{J}$ for $i=0, 1$, satisfying the condition of Proposition 4, along with the additional condition that the eigenvalues of $\Delta_0(s)^{-1}\Delta_1(s)$ do not intersect the nonpositive real axis including infinity  at any point in $\mathbb{C}_+$.

\end{proposition}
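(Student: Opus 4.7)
The plan is to mirror the argument of Proposition~2 at the matrix level, with $\det\Delta_\lambda$ playing the role of $\delta_\lambda$. Following the proof of Proposition~4, a proper compensator $K(s)=D_c(s)^{-1}N_c(s)$ stabilizes every $P_\lambda$ for $\lambda\in[0,1]$ if and only if there exists $\Delta_\lambda(s)\in\mathbb{H}^{m\times m}$ with $\det\Delta_\lambda(s)\in\mathbb{J}$ such that $N_cN_\lambda+D_cD_\lambda=\Delta_\lambda$. Combining this with the two relations $N_cN_i+D_cD_i=\Delta_i$, $i=0,1$, supplied by Proposition~4 and using linearity in $\lambda$, one obtains
\begin{equation*}
\Delta_\lambda(s)=\lambda\Delta_1(s)+(1-\lambda)\Delta_0(s),
\end{equation*}
so the only new requirement is that $\det\Delta_\lambda(s)$ has no zero in $\mathbb{C}_+$ for each $\lambda\in(0,1)$; the endpoints are already covered by Proposition~4 itself.

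Next I would translate this requirement into the eigenvalue condition. Since $\det\Delta_0\in\mathbb{J}$, the matrix $\Delta_0(s)$ is invertible for every $s\in\mathbb{C}_+$, so one can factor
\begin{equation*}
\det\Delta_\lambda(s)=\det\Delta_0(s)\cdot\det\bigl((1-\lambda)I+\lambda\,\Delta_0(s)^{-1}\Delta_1(s)\bigr).
\end{equation*}
Using the diagonalizability hypothesis, write $\Delta_0(s)^{-1}\Delta_1(s)=T(s)\operatorname{diag}(\mu_1(s),\dots,\mu_m(s))T(s)^{-1}$; the second factor then becomes $\prod_{i=1}^m\bigl((1-\lambda)+\lambda\mu_i(s)\bigr)$. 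Hence $\det\Delta_\lambda(s)=0$ for some $\lambda\in(0,1)$ and some $s\in\mathbb{C}_+$ exactly when some eigenvalue satisfies
\begin{equation*}
\mu_i(s)=-\frac{1-\lambda}{\lambda}=1-\frac{1}{\lambda}\in(-\infty,0),
\end{equation*}
which is the direct matrix analogue of the scalar step in Proposition~2. Reversing the reasoning recovers a forbidden $\lambda\in(0,1)$ whenever some $\mu_i(s)$ lands on $(-\infty,0)$ at a point of $\mathbb{C}_+$, giving the equivalence.

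The ``including infinity'' clause mirrors the same clause in Proposition~2(iii) and the convention that $\infty\in\mathbb{C}_+$. To handle it cleanly I would phrase the eigenvalue condition through the generalized pencil $\lambda\Delta_1(s)+(1-\lambda)\Delta_0(s)$: an infinite generalized eigenvalue corresponds to a direction in which $\Delta_0$ loses rank relative to $\Delta_1$, which forces a zero of $\det\Delta_\lambda(s)$ in $\mathbb{C}_+$ at the endpoint $\lambda\to 1$ and would contradict $\det\Delta_1\in\mathbb{J}$. The main obstacle I anticipate is not the algebra but the book-keeping of the eigenvalue functions $\mu_i(s)$ across $\mathbb{C}_+$: they are only locally well-defined analytic functions and may cross at isolated points, so the diagonalizability hypothesis is used precisely to pick a consistent labeling that turns the pointwise statement ``$\mu_i(s)\notin(-\infty,0]\cup\{\infty\}$'' into a meaningful global condition on the spectrum of $\Delta_0^{-1}\Delta_1$ on all of $\mathbb{C}_+$.
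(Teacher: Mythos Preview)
Your proposal is correct and follows essentially the same route as the paper: reduce to $\det\Delta_\lambda(s)\neq 0$ on $\mathbb{C}_+$ for $\lambda\in(0,1)$, factor through $\Delta_0$, and read off the eigenvalue condition $\mu_i(s)\neq 1-1/\lambda$ from the diagonalization. The paper's handling of the ``including infinity'' clause is even terser than yours---it simply invokes $\det\Delta_0,\det\Delta_1\in\mathbb{J}$ to rule out $0$ and $\infty$ as eigenvalues---and it does not discuss the labeling of eigenvalue branches across $\mathbb{C}_+$ at all, so your closing remarks go beyond what the paper addresses.
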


\begin{proof}
    Suppose $K(s)$ is the required compensator. To stabilize the plants $P_0(s),P_1(s)$ simultaneously, a necessary and sufficient condition is given by Proposition 4. Additionally, $K(s)$ simultaneously stabilizes every other plants $P_\lambda(s)$ for $\lambda \in (0,1) $ if and only if there exist $\Delta_\lambda(s) \in \mathbb{H}^{m \times m}$, $\operatorname{det} \Delta_\lambda(s) \in \mathbb{J}$ such that
    \begin{equation}
        \quad N_c(s)N_\lambda(s) +D_c(s)D_\lambda(s) =\Delta_\lambda(s), \lambda\in (0,1).
    \end{equation}
    By calculation,
    \begin{equation}
        \lambda\Delta_{1}(s)+(1-\lambda)\Delta_{0}(s)=\Delta_{\lambda}(s).
    \end{equation}

Since $\Delta_0(s) \in \mathbb{H}^{m \times m}$ and $\Delta_1(s) \in \mathbb{H}^{m \times m}$, we have  $\Delta_\lambda(s) \in \mathbb{H}^{m \times m},\lambda\in(0,1)$. In order to ensure $\operatorname{det} \Delta_\lambda(s) \in \mathbb{J}$, we need 
\begin{equation}\label{eigenvalues}
    \operatorname{det} (\lambda\Delta_{1}(s)+(1-\lambda)\Delta_{0}(s))\neq 0, \lambda\in(0,1).
\end{equation}
at $s\in\mathbb{C}_+$. Note that over the complex numbers $\mathbb{C}$, almost every matrix is diagonalizable. Here we suppose $(\Delta_0)^{-1}\Delta_1$ is diagonalizable at $s\in \mathbb{C}_+$ and suppose $\alpha_1(s),\alpha_2(s),\cdots,\alpha_m(s)$ are its eigenvalues, then  \eqref{eigenvalues} means $\alpha_i(s)\neq (1-1/\lambda)$ for all $\lambda\in(0,1)$
at $s\in \mathbb{C}_+$. Also due to $\operatorname{det}(\Delta_0)\in\mathbb{J}$ and $\operatorname{det}(\Delta_1)\in\mathbb{J}$, we can conclude that to ensure simultaneous stabilization, the eigenvalues of $\Delta_0(s)^{-1}\Delta_1(s)$ can not  intersect the nonpositive real axis including infinity  at $s\in \mathbb{C}_+$. 
\end{proof}

Next, we reformulate the MIMO simultaneous stabilization problem as multivariable analytic interpolation problem.

From Proposition 4, we can get the interpolation constraints
\begin{equation}
    \Delta_0(s_i)^{-1}\Delta_1(s_i)=M_i,\quad i=1,\cdots,t.
\end{equation}
Let 
\begin{equation}
   F_1(s)=(\Delta_0(s)^{-1}\Delta_1(s))^{1/2}
\end{equation}
be the square-root of $\Delta_0(s)^{-1}\Delta_1(s)$,  which means $F_1(s)$ need satisfy
\begin{equation}
    F_1(s_i)=M_i^{1/2},\quad i=1,\cdots,t.
\end{equation}
Using the M{\"o}bius transformation defined by
\begin{equation}\label{Mobius}
    z=\frac{1-s}{1+s}.
\end{equation}
which can map $\mathbb{C_+}$ into $\mathbb{D}$, we then set
\begin{equation}
	F(z):=F_1((1-z)(1+z)^{-1})
\end{equation}
which is obviously analytic in $\mathbb{D}$. The following proposition is straightforward.
\begin{proposition}
    If 
    \begin{equation}
        M_i^{1/2}+(M_i^{1/2})^{*}>0,i=1,\cdots,t,
    \end{equation}
    the simultaneous stabilization problem \eqref{MIMO} is simplified to identifying a Carath\'eodory function $F(z)$ such that  the interpolation constraints 
    \begin{equation}
        F(z_i)=M_i^{1/2},i=1,\cdots,t
    \end{equation}
    is satisfied, where $z_i=(1-s_i)/(1+s_i)$. This is a matrix case analytic interpolation problem \eqref{interpolation} when $m=t,n_1=\cdots=n_m=1$. 
\end{proposition}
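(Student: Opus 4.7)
The plan is to chain Propositions~4 and~5 together to extract pointwise interpolation data, then show that the matrix square-root construction followed by the M\"obius transformation \eqref{Mobius} maps the problem bijectively onto a multivariable Carath\'eodory interpolation problem of the form \eqref{interpolation} with one simple interpolation node per zero of $\det M(s)$.

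First I would fix the pointwise data. By Proposition~4, a compensator simultaneously stabilizes $P_0$ and $P_1$ iff there exist $\Delta_0,\Delta_1\in\mathbb{H}^{m\times m}$ with $\det\Delta_i\in\mathbb{J}$ satisfying \eqref{matrx_cond} at each zero $s_i$ of $\det M(s)$ in $\mathbb{C}_+$. Feeding the rank-one assumption on $\mathrm{Adj}(M(s_i))$ into \eqref{delta} yields the value $M_i=\Delta_0(s_i)^{-1}\Delta_1(s_i)$ at each node, so any candidate $F_1(s):=(\Delta_0(s)^{-1}\Delta_1(s))^{1/2}$ must satisfy $F_1(s_i)=M_i^{1/2}$. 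To make this square root a legitimate matrix-valued analytic function on $\mathbb{C}_+$, I would appeal to Proposition~5: its eigenvalue hypothesis says that no eigenvalue of $\Delta_0(s)^{-1}\Delta_1(s)$ ever touches $(-\infty,0]\cup\{\infty\}$ for $s\in\mathbb{C}_+$, so the principal branch of the matrix square root is well defined and analytic throughout $\mathbb{C}_+$, and its eigenvalues lie strictly in the open right half plane.

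Next I would push everything to the disk. Setting $F(z):=F_1((1-z)(1+z)^{-1})$, the transformation \eqref{Mobius} carries $\mathbb{C}_+$ onto $\mathbb{D}$, so $F$ is analytic in $\mathbb{D}$, and the nodes $s_i$ map to distinct points $z_i=(1-s_i)/(1+s_i)\in\mathbb{D}$ at which $F(z_i)=M_i^{1/2}$. This realizes exactly the interpolation data \eqref{interpolation} with $m=t$ interpolation nodes and $n_1=\cdots=n_m=1$, and reduces the MIMO stabilization question to asking whether an $F$ with the Carath\'eodory positivity \eqref{F+F*} exists satisfying these constraints. The standing assumption $M_i^{1/2}+(M_i^{1/2})^{*}>0$ is precisely the pointwise necessary condition for such an $F$ to exist, since any Carath\'eodory function has positive definite Hermitian part at every point of $\mathbb{D}$.

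The main obstacle I anticipate is the sufficiency direction: showing that starting from \emph{any} Carath\'eodory $F$ meeting the interpolation data one can actually reconstruct a proper stabilizing compensator, not merely recover the pointwise values $M_i^{1/2}$. Concretely, one must read off $\Delta_0^{-1}\Delta_1=F_1^{2}$ on $\mathbb{C}_+$, factor this into $\Delta_0,\Delta_1\in\mathbb{H}^{m\times m}$ whose determinants lie in $\mathbb{J}$, and invert the identity
\begin{equation*}
\bigl[N_c\ \ D_c\bigr]=\bigl[\Delta_0\ \ \Delta_1\bigr]\frac{\mathrm{Adj}(M(s))}{\det M(s)}
\end{equation*}
from Proposition~4 so that $N_c,D_c\in\mathbb{H}^{m\times m}$. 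The analyticity of $F$ in $\mathbb{D}$ together with the strict positivity $F+F^{*}>0$ guarantees that the eigenvalues of $F_1^{2}$ avoid the nonpositive real axis in $\mathbb{C}_+$, which is exactly what Proposition~5 demands; meanwhile the interpolation conditions $F(z_i)=M_i^{1/2}$ force the bracketed expression above to be analytic at each $s_i\in\mathbb{C}_+$, yielding $N_c,D_c\in\mathbb{H}^{m\times m}$. The remaining parameterization of all such $F$ by the Schur polynomial $\Sigma(z)$ is then handled by the CEE framework developed in Section~\ref{sec:CEE}, closing the equivalence.
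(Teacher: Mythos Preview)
Your proposal is correct and follows the same skeleton as the paper: extract the pointwise data $M_i=\Delta_0(s_i)^{-1}\Delta_1(s_i)$ from Proposition~4, pass to the matrix square root $F_1$, and then transport to the disc via \eqref{Mobius} to land on a problem of type \eqref{interpolation}. The paper, however, does not give a formal proof of this proposition at all; it simply declares it ``straightforward'' after the preparatory paragraph and then records the reconstruction formulas for $K(s)$. Your write-up is therefore more thorough in two respects: you explicitly invoke the eigenvalue condition of Proposition~5 to justify that the principal matrix square root is well defined and analytic on $\mathbb{C}_+$ (the paper does not isolate this step), and you sketch the sufficiency direction, arguing that the Carath\'eodory condition \eqref{F+F*} forces the eigenvalues of $F_1^2$ off the nonpositive real axis so that Proposition~5 applies and the compensator formula is analytic at each $s_i$. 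None of this contradicts the paper; it simply fills in what the paper leaves implicit.
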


 After obtaining the solution for $F(z)$, we can apply the following transformations to obtain the compensator $K(s)$.
Denote
\begin{equation}
    \frac{Adj(M(s))}{\det(M(s))}=\begin{bmatrix}
        m_{11}(s) & m_{12}(s) \\
	m_{21}(s) & m_{22}(s)
    \end{bmatrix},
\end{equation}
then
\begin{equation}
    F_1(s)=F((1-s)(1+s)^{-1})
\end{equation}
\begin{equation}
    K(s)=D_c^{-1}N_c=(m_{12}+F_1^2m_{22})^{-1}(m_{11}+F_1^2m_{21})
\end{equation}

\section{Solutions to the analytic interpolation problem}\label{sec:CEE}
This section illustrates the approach to address the analytic interpolation problem \eqref{interpolation} employing the Covariance Extension Equation \cite{CLccdc,CLtac,CLcdc}. Without loss of generality, we set $z_{1}=0$ and $W_{1}=\frac{1}{2}I$. Consequently, $F(z)$ can be expressed as:
 \begin{equation}
\label{ }
F(z)=\tfrac12 I + zH(I-zF)^{-1}G,
\end{equation}
where matrices $H\in\mathbb{R}^{\ell\times\ell n}$, $F\in\mathbb{R}^{\ell n\times\ell n}$, $G\in\mathbb{R}^{\ell n\times\ell}$, and all eigenvalues of matrix $F$ reside in $\mathbb{D}$, $(H,F)$ forms an observable pair.

By defining $\Phi_+(z):=F(z^{-1})$, we obtain:
\begin{equation}
\label{ }
\Phi_+(z)=\tfrac12 I + H(zI-F)^{-1}G,
\end{equation}
which has its poles within the unit disc $\mathbb{D}$. Based on \eqref{F+F*}, it follows that:
\begin{equation}
\Phi_+(e^{i\theta})+\Phi_+(e^{-i\theta})'>0, \quad -\pi\leq \theta\leq\pi ,
\end{equation}
and thus $\Phi_+(z)$ is positive real \cite[Chapter 6]{LPbook}. The problem is then reduced to identifying a rational positive real function $\Phi_+(z)$ of degree at most $\ell n$ that meets the interpolation constraints \eqref{interpolation}. By a coordinate transformation $(H,F,G)\to(HT^{-1},TFT^{-1},TG)$, we can select $(H,F)$ in the observer canonical form
\begin{equation}
    H=\text{diag}(h_{t_1},h_{t_2},\dots,h_{t_\ell}) \in \mathbb{R}^{\ell\times n\ell}
\end{equation}
with $h_\nu:=(1,0,\dots,0)\in\mathbb{R}^\nu$, and
\begin{equation}
\label{F}
F=J-AH \in\mathbb{R}^{n\ell\times n\ell}
\end{equation}
where $A\in\mathbb{R}^{n\ell\times \ell}$, $J:=\text{diag}(J_{t_1},J_{t_2},\dots, J_{t_\ell})$ with $J_\nu$ the $\nu\times\nu$ shift matrix
\begin{equation}
J_\nu =\begin{bmatrix}0&1&0&\dots&0\\0&0&1&\dots&0\\\vdots&\vdots&\vdots&\ddots&0\\
0&0&0&\dots&1\\0&0&0&\dots&0\end{bmatrix}.
\end{equation}
Here $t_1,t_2,\dots,t_\ell$ are the {\em observability indices\/} of $\Phi_+(z)$, and 
\begin{equation}
\label{tsum}
t_1+t_2+\dots+t_\ell=n\ell.
\end{equation}
Next define $\Pi(z):=\text{diag}(\pi_{t_1}(z),\pi_{t_2}(z),\dots,\pi_{t_\ell}(z))$, where $\pi_\nu(z)=(z^{\nu-1},\dots,z,1)$, and the $\ell\times\ell$ matrix polynomial
\begin{equation}
\label{A(z)}
A(z)=D(z) +\Pi(z)A,
\end{equation}
where 
\begin{equation}
\label{D(z)}
D(z):=\text{diag}(z^{t_1},z^{t_2},\dots, z^{t_\ell}).
\end{equation}

From Lemma 1 in \cite{CLcdc},
\begin{equation}\label{lemma}
    H(zI-F)^{-1}=A(z)^{-1}\Pi(z),
\end{equation}
and consequently
 \begin{equation}
\label{AinvB}
\Phi_+(z)=\tfrac12 A(z)^{-1}B(z),
\end{equation}
where
\begin{displaymath}
B(z)=D(z) +\Pi(z)B
\end{displaymath}
with
\begin{equation}
\label{AG2B}
B=A+2G.
\end{equation}

Let $V(z)$ denote the minimum-phase spectral factor of
\begin{equation}
V(z)V(z^{-1})'=\Phi(z) := \Phi_+(z) + \Phi_+(z^{-1})' .
\end{equation}
From \cite[Chapter 6]{LPbook}, $V(z)$ has a realization of the form
\begin{equation}
V(z)=H(zI-F)^{-1}K + R,
\end{equation}
and following \eqref{lemma}, it can be expressed as
\begin{equation}
\label{ }
V(z)=A(z)^{-1}\Sigma(z)R,
\end{equation}
where
\begin{equation}
\label{Sigma(z)}
\Sigma(z)=D(z)+\Pi(z)\Sigma 
\end{equation}
with 
\begin{equation}
\label{Sigma}
\Sigma = A+KR^{-1}. 
\end{equation}

From stochastic realization theory \cite[Chapter 6]{LPbook} we obtain 
\begin{align}
  K  & =(G-FPH')(R')^{-1}  \label{K}\\
  RR'  &  = I-HPH' \label{R}
\end{align}
where $P$ is the unique minimum solution of the algebraic Riccati equation
\begin{equation}
\label{Riccati}
P=FPF' + (G-FPH')(I-HPH')^{-1}(G-FPH')'.
\end{equation}
From \eqref{F}, \eqref{Sigma}, \eqref{K} and \eqref{R}, \eqref{Riccati} can be written
\begin{equation}
\label{AREmod}
P=\Gamma (P-PH'HP)\Gamma' +GG' .
\end{equation}
where
\begin{equation}
\label{Gamma}
\Gamma=J-\Sigma H.
\end{equation}

The article \cite{CLtac} demonstrates that $G$ can be expressed as $u + U(\Sigma + \Gamma PH')$, with $u$ and $U$ being fully determined by the interpolation data \eqref{interpolation}. The  analytic interpolation problem involves determining the values of $(A,B)$ based on the given interpolation data \eqref{interpolation} and a specific matrix polynomial $\Sigma(z)$.   

In \cite{CLtac} we conclude that the conditions for the existence of solutions to this problem  only rely on the interpolation data. If the solution exists, it is also shown in \cite{CLtac}  that the {\em Covariance Extension Equation (CEE)}
\begin{subequations}\label{PgCCE}
	\begin{equation} \label{CEE}
		P=\Gamma (P-PH'HP)\Gamma' +G(P)G(P)' 
	\end{equation}
	 with
	\begin{equation}\label{g(P)} 
		G(P)= u +U(\Sigma + \Gamma PH') ,
	\end{equation}
\end{subequations}
 has a unique symmeric solution $P\geq 0$ with the property that $HPH'<1$. Additionally, for every $\Sigma$, there exists a unique solution to the analytic interpolation problem, which is expressed as follows:

\begin{subequations}\label{Psigma2ab}
	\begin{equation}\label{a}
		A=(I-U)(\Gamma PH'+\Sigma)-u
	\end{equation}
	\begin{equation}\label{b}
		B=(I+U)(\Gamma PH'+\Sigma)+u
	\end{equation}
	\begin{equation}\label{rho}
		R=(I-HPH')^{\frac{1}{2}} .
	\end{equation}
\end{subequations}
 The equation \eqref{PgCCE} can be solved using a homotopy continuation approach, as described in \cite{CLtac}. 
 Distinct selections of matrix polynomial $\Sigma(z)$ yield different feasible solutions.

\section{ Computational examples}\label{sec:applications}
\subsection{Example 1}

Consider a simple case where $x_0,y_0,x_1,y_1\in \mathbb{H}$ are given as follows:
\begin{equation}
    x_0=\frac{(s+12)(s-7)}{(s+1.5)(s+4.2)},\quad y_0=\frac{(s-2)(s-1)}{(s+2.5)(s+7.2)}
\end{equation}
\begin{equation}
    x_1=\frac{(s+3.6)(s-8)}{(s+4.7)(s+5.1)},\quad y_1=\frac{(s-1.3)(s+4)}{(s+3.3)(s+2.4)}
\end{equation}
To visualize unstable poles of $p_\lambda$ when $\lambda$ varies from 0 to 1, we apply the transformation:
\begin{equation}\label{trans}
    z=\frac{1+s}{1-s}
\end{equation}
which maps $\mathbb{C}_{-}$ to $\mathbb{D}$ and $\mathbb{C}_{+}$ to $\mathbb{D}^{C}$. Hence, a system is considered stable when all of its poles are located within the unit circle. Fig.~\ref{beforestabilization1} illustrates the positions of all poles of $p_\lambda$ as $\lambda$ ranges from 0 to 1 in increments of 0.1.

\begin{figure}[htp]
    \centering
    \includegraphics[width=1\linewidth]{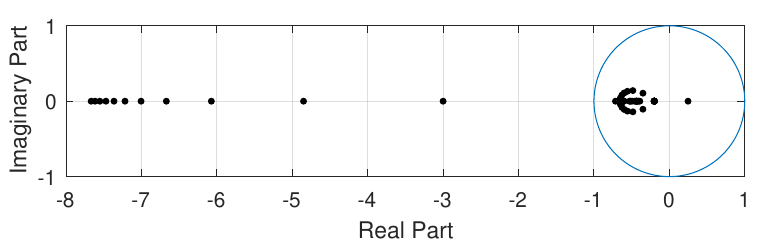}
    \caption{Poles of $p_\lambda$ before stabilization}
    \label{beforestabilization1}
\end{figure}

From Fig.~\ref{beforestabilization1}, some systems are unstable. To stabilize them, we observe that $x_0y_1-x_1y_0$ has zeros at $s_1=6.8652$ and $s_2=1$ in $\mathbb{C_+}$. To achieve stability, we need interpolation conditions:
\begin{equation}
    \left(\frac{\delta_{1}}{\delta_{0}}\right)(s_1)=\left(\frac{y_1}{y_0}\right)(s_1),\quad\left(\frac{\delta_{1}}{\delta_{0}}\right)(s_2)=\left(\frac{y_1}{y_0}\right)(s_2)
\end{equation}
By using the Möbius transformation \eqref{mob}, the problem then becomes to find a Carath\'eodory function $f(z)$ satisfying:
\begin{equation}
    f\left(\frac{1-s_1}{1+s_1}\right)=\sqrt{\left(\frac{y_1}{y_0}\right)(s_1)},\quad f\left(\frac{1-s_2}{1+s_2}\right)=\sqrt{\left(\frac{y_1}{y_0}\right)(s_2)}
\end{equation}
This is a special case of the analytic interpolation problem with $\ell=1,n_1=n_2=1$, called Nevanlinna-Pick interpolation problem. According to Proposition 5 in \cite{CLtac}, it has solutions. We choose $\Sigma(z)=z-0.5$. After calculation, we obtain:
\begin{equation}
\frac{\delta_1}{\delta_{0}}=\frac{93.342(s+0.1608)(s+0.1606)}{s^2+65.01s+1057}
\end{equation}

After stabilization, the poles of
\begin{equation}\label{tranfer}
	p_\lambda(s)(1+k(s)p_\lambda(s))^{-1}
\end{equation}
 where $\lambda$ changing from 0 to 1 at intervals of 0.1, are depicted in Fig.~\ref{afterstabilization1}. These poles are situated in $\mathbb{D}$, thereby confirming the system's stability.

\begin{figure}[htp]
    \centering
    \includegraphics[width=0.8\linewidth]{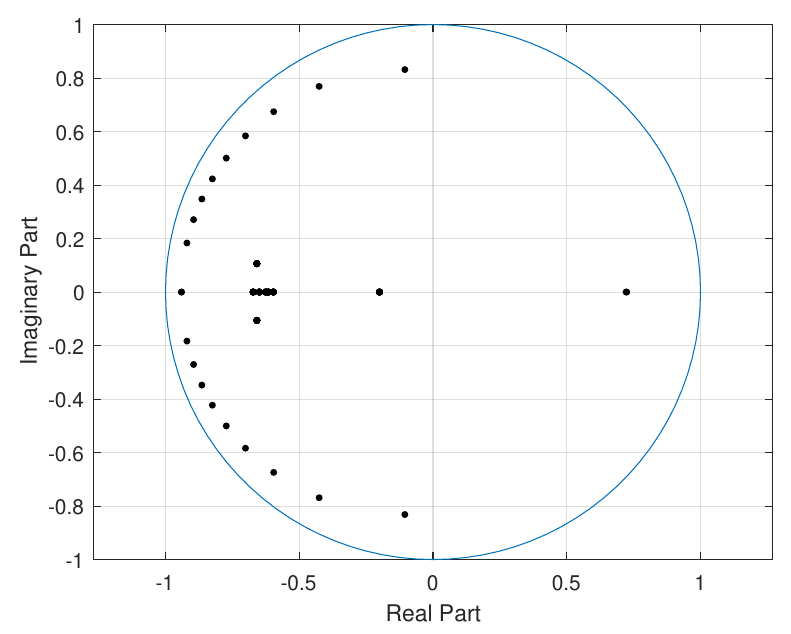}
    \caption{Poles of feedback systems after stabilization}
    \label{afterstabilization1}
\end{figure}

To demonstrate that different choices of $\Sigma(z)$ produce different feasible solutions, we vary the zero $z_0$ of $\Sigma(z)$ from 0 to 1. Fig.~3 shows the results for $z_0=0,0.2,0.4,0.6,0.8,0.99$ respectively, indicating that the solution changes with different $\Sigma(z)$.

\begin{figure}[htbp]
	\centering
	
		\begin{minipage}[t]{0.45\linewidth}
			\centering
			\includegraphics[width=1\linewidth]{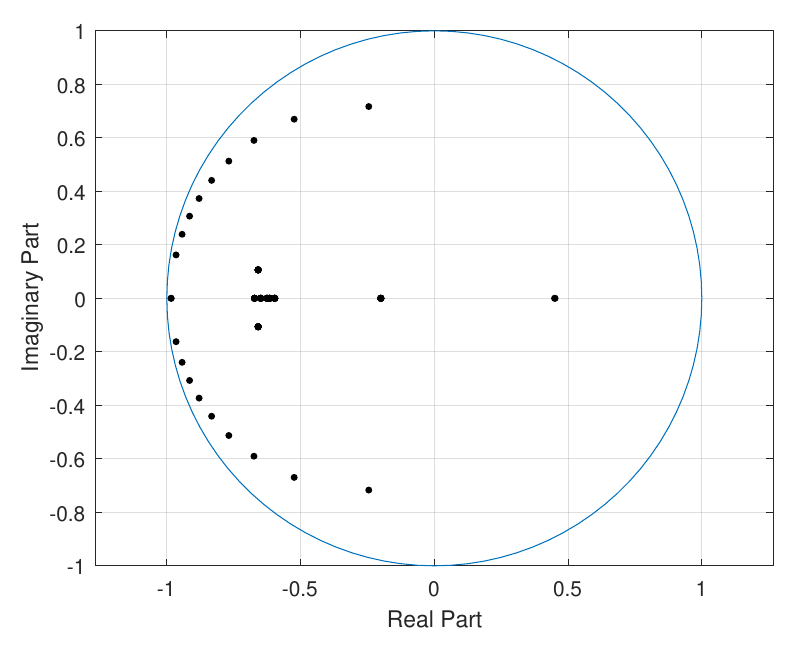}
		\end{minipage}%
		\begin{minipage}[t]{0.45\linewidth}
			\centering
			\includegraphics[width=1\linewidth]{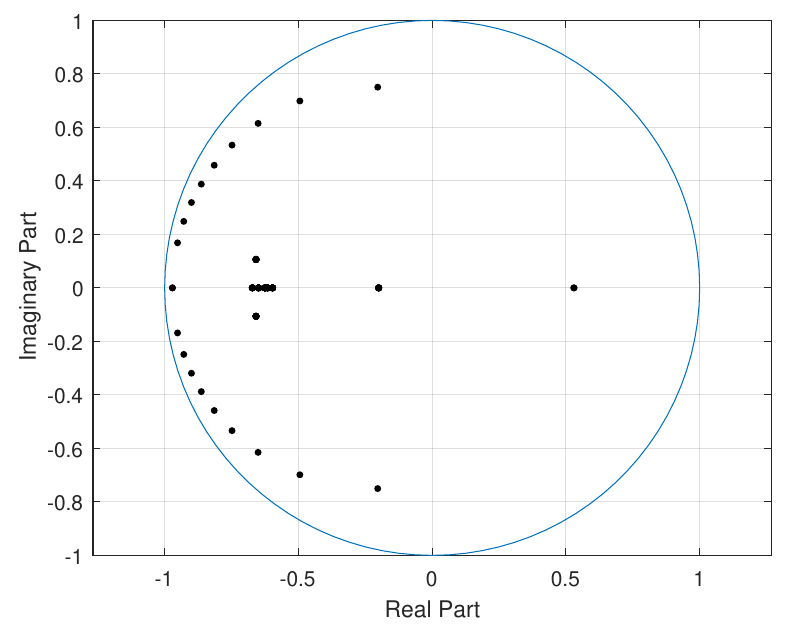}
		\end{minipage}%

		\begin{minipage}[t]{0.45\linewidth}
			\centering
			\includegraphics[width=1\linewidth]{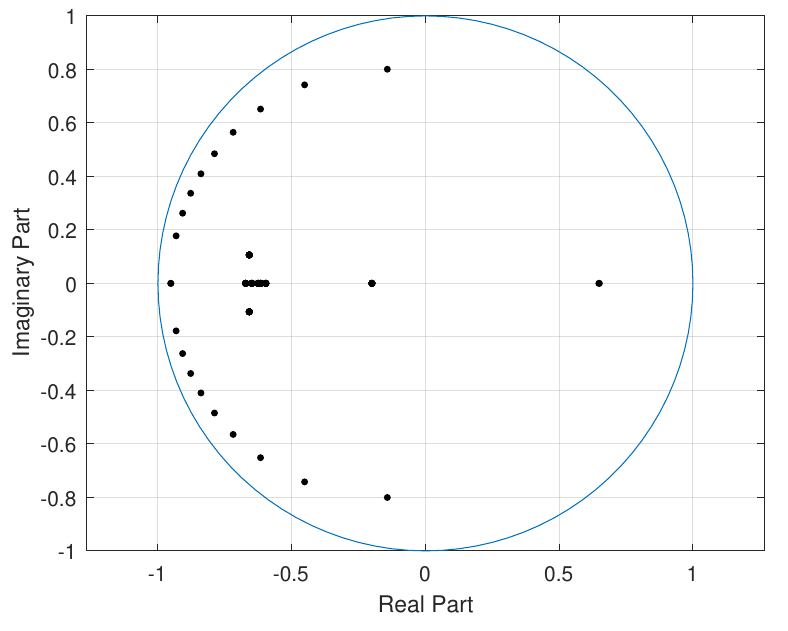}
		\end{minipage}
		\begin{minipage}[t]{0.45\linewidth}
			\centering
			\includegraphics[width=1\linewidth]{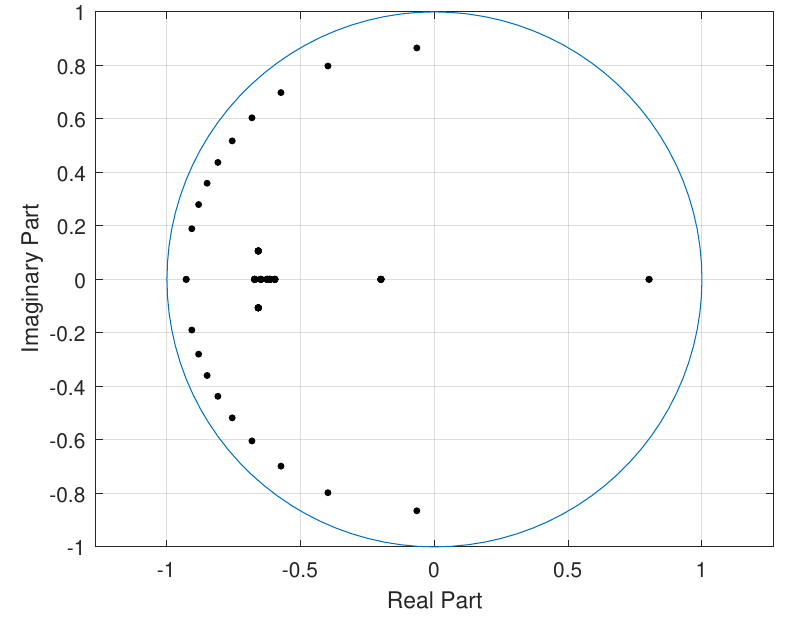}
		\end{minipage}

		\begin{minipage}[t]{0.45\linewidth}
			\centering
			\includegraphics[width=1\linewidth]{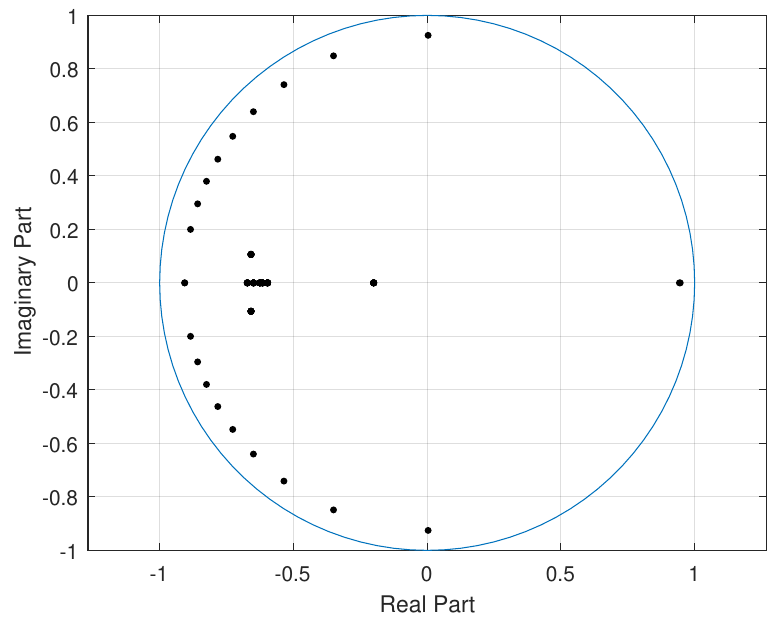}
		\end{minipage}
		\begin{minipage}[t]{0.45\linewidth}
			\centering
			\includegraphics[width=1\linewidth]{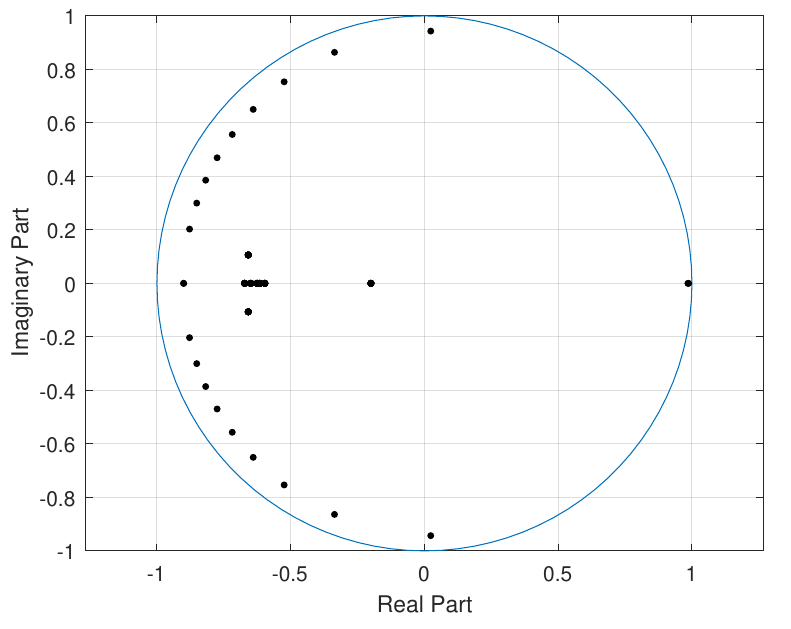}
		\end{minipage}
	\centering
	\label{sigma_zero1}
	\caption{Poles of the stabilized system with diferent $\Sigma(z)$}
\end{figure}

\subsection{Example 2}
Now, let's consider more complex systems that include derivative constraints:
\begin{equation}
    x_0=\frac{(s+0.7)(s+-0.1)}{(s+0.4)(s+0.9)},\quad y_0=\frac{(s-1)^2}{(s+0.5)(s+1.8)}
\end{equation}
\begin{equation}
    x_1=\frac{2(s+1.7)(s-0.3)}{(s+0.9)(s+1.4)},\quad y_1=\frac{(s-1)^2}{(s+1.2)(s+0.8)}
\end{equation}
It is evident that when $\lambda$ varies from 0 to 1, $p_\lambda$ has poles at 1, indicating instability in all systems.

By calculation, $x_0y_1-x_1y_0$ has zeros at $s_1=1$ and $s_2=357/604$  with multiplicities 2 and 1, respectively. This implies that we need to find $\delta_0$ and $\delta_1$ satisfying:
\begin{align}
    &\frac{\delta_{1}}{\delta_{0}}(s_1)=\frac{x_1}{x_0}(s_1),\quad\left(\frac{\delta_{1}}{\delta_{0}}\right)'(s_1)=\left(\frac{x_1}{x_0}\right)'(s_1)\\
    &\frac{\delta_{1}}{\delta_{0}}(s_2)=\frac{y_1}{y_0}(s_2)
\end{align}
Using the Möbius transformation \eqref{mob}, the problem is reduced to finding a Carath\'eodory function $f(z)$ satisfying:
\begin{align}
    &f(0)=\sqrt{\frac{x_1}{x_0}(1)},\quad f'(0)=-\frac{\left(\frac{x_1}{x_0}\right)'(1)}{f(0)}\\
    &f\left(\frac{1-s_2}{1+s_2}\right)=\sqrt{\frac{y_1}{y_0}(s_2)}
\end{align}
This is an analytic interpolation problem with derivative constraint. Here, we choose $\Sigma(z)=z(z-0.1)$. By calculation, we obtain:
\begin{equation}
    \frac{\delta_1}{\delta_0}=\frac{0.53942 (s+3.69)^2 (s+0.1353)^2}{(s^2 + 0.917s + 1.34)^2}
\end{equation}
Because there are three interpolation constraints, we end up with an $f(z)$ of degree 2 and a $\Delta_1/\Delta_0$ of degree 4. Fig.~\ref{example2} shows the poles of \eqref{tranfer} with $\lambda$ changing from 0 to 1 at intervals of 0.1. Since all poles are in $\mathbb{D}$, all systems are stable.

\begin{figure}[htp]
    \centering
    \includegraphics[width=0.8\linewidth]{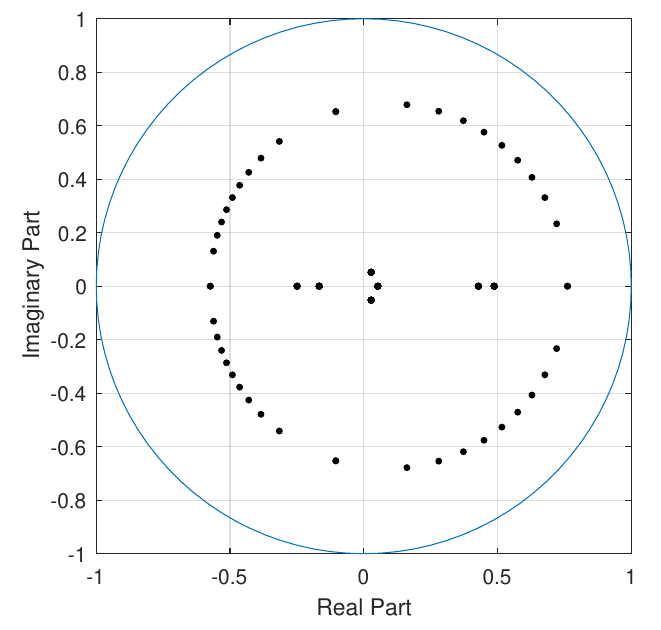}
    \caption{Poles of feedback systems after stabilization}
    \label{example2}
\end{figure}
\subsection{Example 3}
We now consider a simple MIMO simultaneous stabilization problem with
\begin{equation}
\begin{split}
     N_0&=\left[\begin{matrix}
	1.1&1.9\\
	2.9&1.1
\end{matrix}\right]\quad D_0=\left[\begin{matrix}
\frac{s-2.2}{s+5.8}&1\\
3&\frac{s-2.6}{s+9.7}
\end{matrix}\right]\\
N_1&=\left[\begin{matrix}
	1&2\\
	4&3
\end{matrix}\right]\quad D_1=\left[\begin{matrix}
	\frac{s-3.3}{s+2.1}&1\\
	6&\frac{s-7.8}{s+0.9}
\end{matrix}\right]
\end{split}   
\end{equation}

When $\lambda$ varies from 0 to 1, there are many unstable poles. To show the poles more clearly, we apply the transformation \eqref{trans}. Fig.~\ref{beforestabilization3} shows  all poles of $p_\lambda$  when $\lambda$ varies from 0 to 1 at
intervals of 0.1. 
\begin{figure}[htp]
	\centering
	\includegraphics[width=1\linewidth]{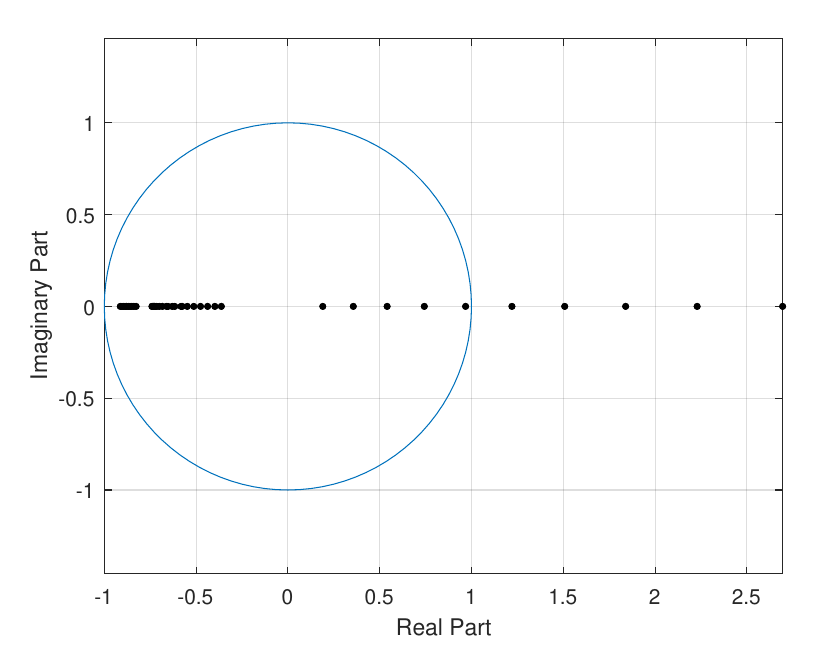}
	\caption{Poles of $P_\lambda$ before stabilization}
	\label{beforestabilization3}
\end{figure}

From Fig.~\ref{beforestabilization3}, we can see there are some systems that are not stable. Using the method in this paper, we first observe that $\det(M(s))$ has two zeros at $s_1=17.21$ and $s_2=0.9769$ in $\mathbb{C_+}$. To stabilize the systems, we  need the interpolation conditions \eqref{delta}, which yield
\begin{equation}
    (\Delta_0(s_1))^{-1}\Delta_1(s_1)=M_1,~(\Delta_0(s_2))^{-1}\Delta_1(s_2)=M_2,
\end{equation}
using the M{\"o}bius transformation \eqref{mob}, and since the Hermitian parts of $M_1^{1/2}, M_2^{1/2}$ are positive, the problem is then reduced to the analytic interpolation problem with $n_1=n_2=1$. The interpolation constraints are
\begin{equation}
\begin{split}
    (\Delta_0^{-1}\Delta_1)^{1/2}(\frac{1-s_1}{1+s_1})&=M_1^{1/2}\\
    (\Delta_0^{-1}\Delta_1)^{1/2}(\frac{1-s_2}{1+s_2})&=M_2^{1/2}
\end{split}
\end{equation}
By the theory in \cite{CLtac}, there exists solutions. Here we choose $\Sigma=[0.3~0;0~0.5]$ and get
\begin{equation}
    ((\Delta_0(s))^{-1}\Delta_1(s))^{1/2}=\begin{bmatrix}
        K_{11} & K_{12} \\
        K_{21} & K_{22} \\
    \end{bmatrix}
\end{equation}
$$
K_{11}=\frac{1.091 s^2 + 61.43 s + 857.6}{s^2 + 55.61 s + 745.3}
$$
$$
K_{12}=\frac{0.1289 s^2 + 25.88 s + 754.7}{s^2 + 55.61 s + 745.3}
$$
$$
K_{21}=\frac{-0.5455 s^2 -24.62 s -236.4}{s^2 + 55.61 s + 745.3}
$$
$$
K_{22}=\frac{0.4219 s^2 + 59.38 s + 1474}{s^2 + 55.61 s + 745.3}
$$

After stabilization, the poles of
\begin{equation}\label{transfer_m}
	P_\lambda(s)(I+K(s)P_\lambda(s))^{-1}
\end{equation}
where $\lambda$ varying from 0 to 1 at intervals of 0.1 are depicted in Fig.~\ref{afterstabilization3}. Given that all poles reside in $\mathbb{D}$, the compensator $K(s)$ can stabilize all these systems.

\begin{figure}[htp]
	\centering
	\includegraphics[width=1\linewidth]{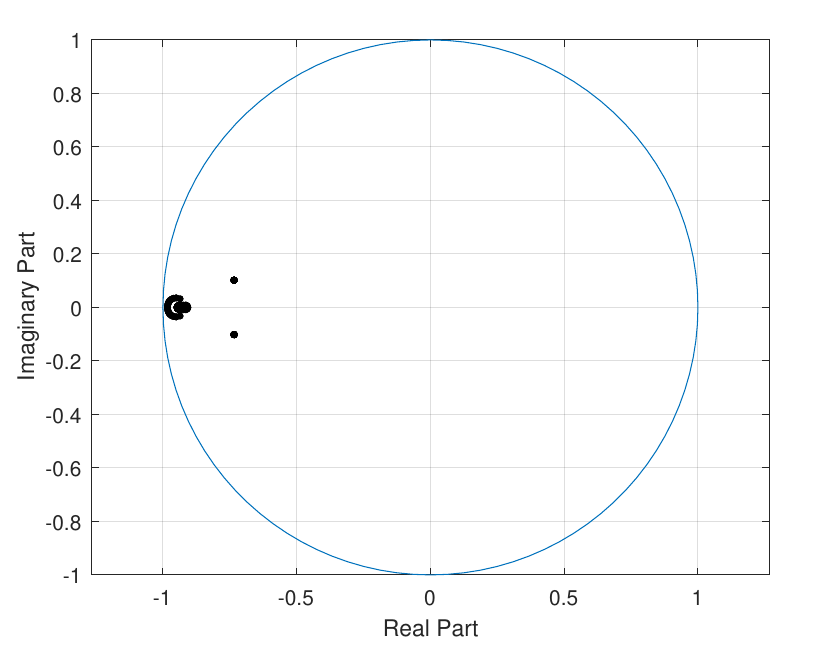}
	\caption{Poles of feedback systems after stabilization}
	\label{afterstabilization3}
\end{figure}

To demonstrate that different choices of $\Sigma$ produce different feasible solutions, we take $\Sigma$ to be
\begin{equation}
    \begin{split}
        a &= \begin{bmatrix} -0.1 & -0.9 \\ 0.4 & -0.6 \end{bmatrix}\qquad
b = \begin{bmatrix} 0.4 & 0.1 \\ 0.5 & 0.4 \end{bmatrix} \\
c &= \begin{bmatrix} 0.2 & 0.35 \\ 0.6 & 0.4 \end{bmatrix} ~~~\qquad
d = \begin{bmatrix} -0.8 & 0.1 \\ 0.6 & -0.2 \end{bmatrix} \\
e &= \begin{bmatrix} -0.65 & 0.22 \\ 0.8 & -0.2 \end{bmatrix} \qquad
f = \begin{bmatrix} 0.8 & -0.33 \\ 0.9 & 0.1 \end{bmatrix}.
    \end{split}
\end{equation}
respectively. Fig.~7 shows the corresponding results, indicating that the solution changes with different $\Sigma$.

\begin{figure}[htbp]
	\centering
		\begin{minipage}[t]{0.45\linewidth}
			\centering
			\includegraphics[width=1\linewidth]{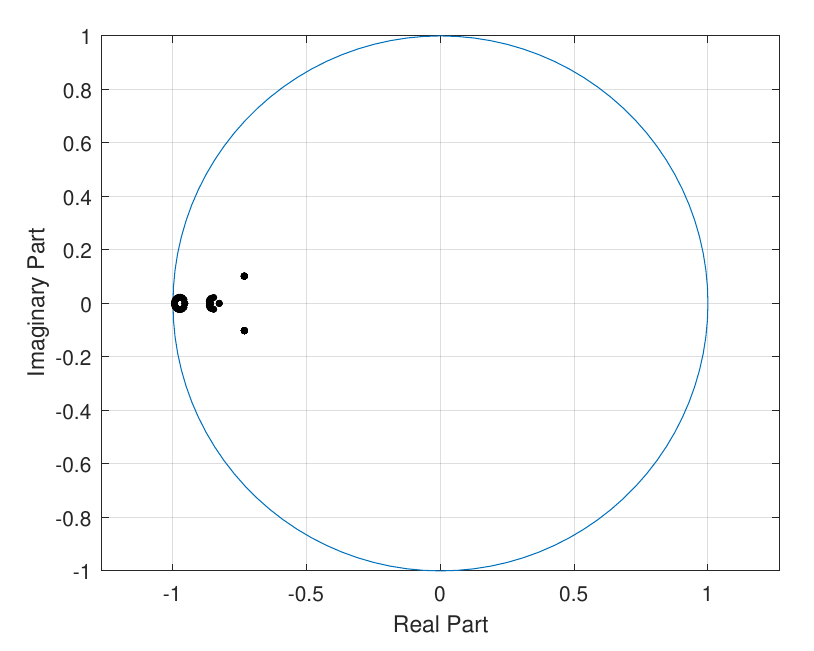}
		\end{minipage}
		\begin{minipage}[t]{0.45\linewidth}
			\centering
			\includegraphics[width=1\linewidth]{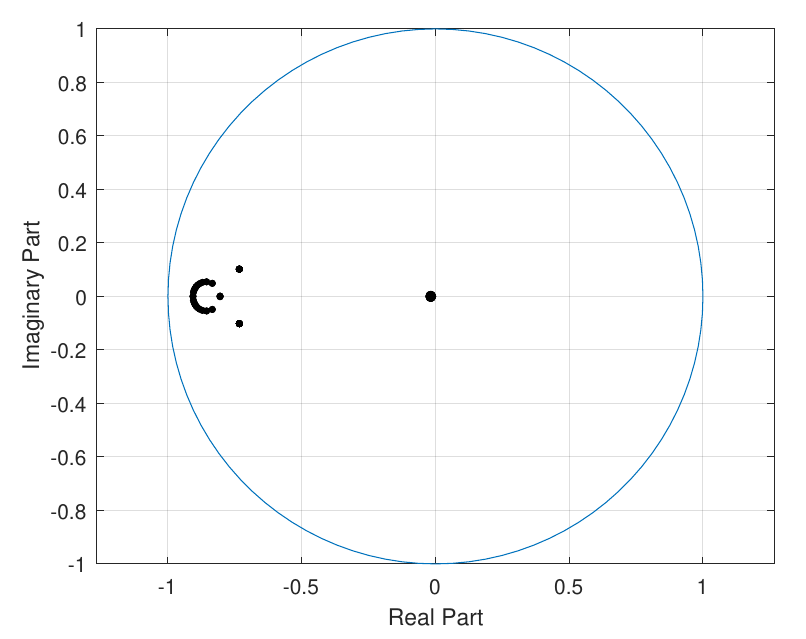}
		\end{minipage}
	
		\begin{minipage}[t]{0.45\linewidth}
			\centering
			\includegraphics[width=1\linewidth]{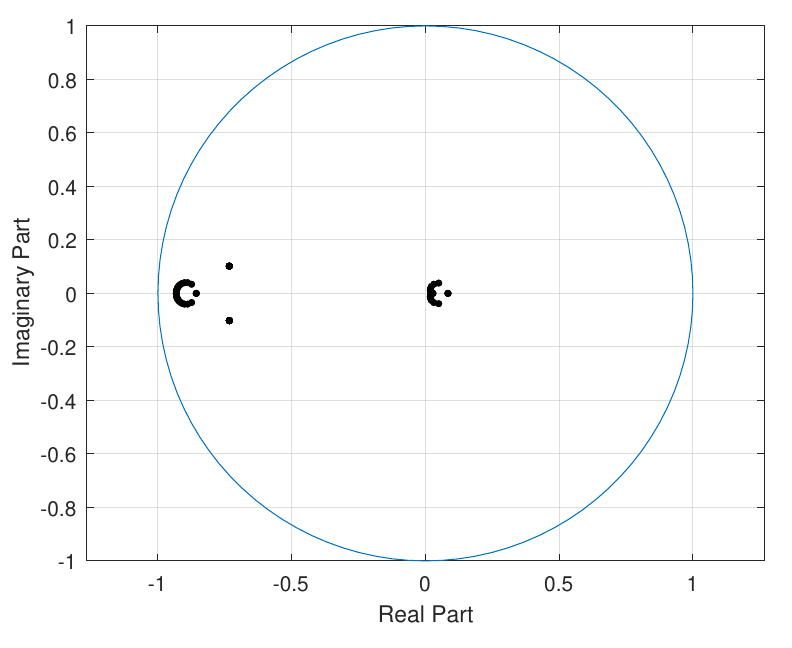}
		\end{minipage}
		\begin{minipage}[t]{0.45\linewidth}
			\centering
			\includegraphics[width=1\linewidth]{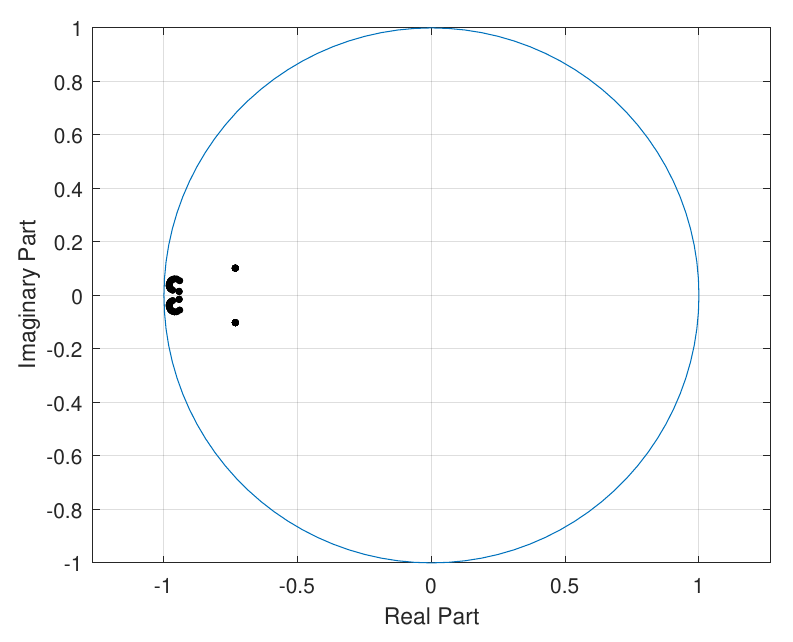}
		\end{minipage}
	\begin{minipage}[t]{0.45\linewidth}
		\centering
		\includegraphics[width=1\linewidth]{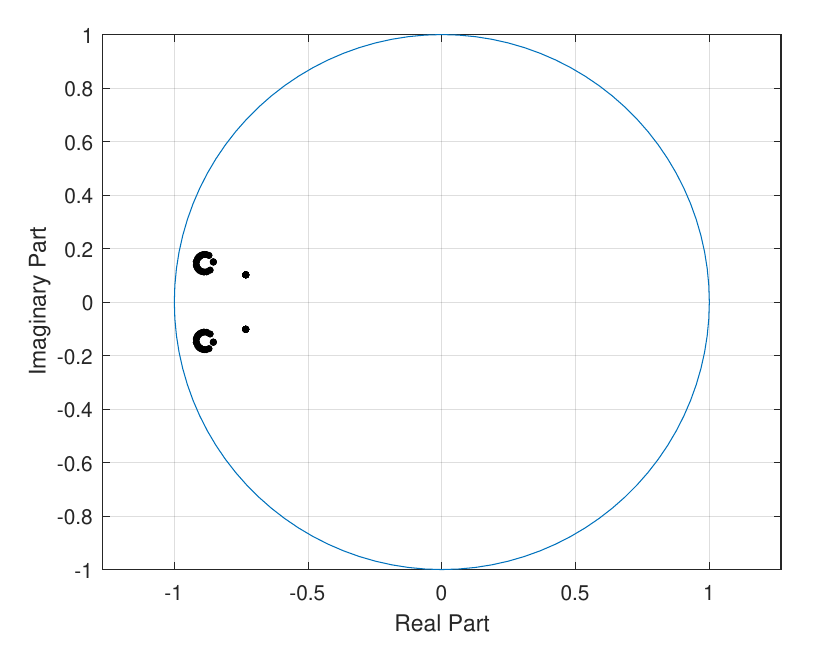}
	\end{minipage}
	\begin{minipage}[t]{0.45\linewidth}
		\centering
		\includegraphics[width=1\linewidth]{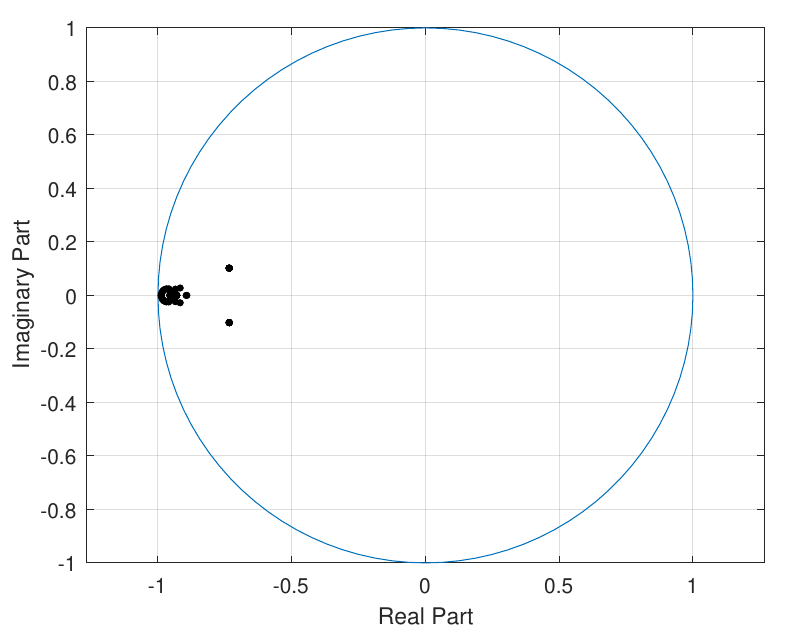}
	\end{minipage}
	\centering
	\label{sigma_zero}
	\caption{Poles of the stabilized system with diferent $\Sigma$}
\end{figure}
\subsection{Example 4}
Next we consider a more complex system which includes complex unstable zeros. Suppose
\begin{equation*}
   N_0=\frac{1}{z^2+2z+10}\left[\begin{matrix}
	3(z^2-2z+1)&5(z^2+2z+10)\\
	4(z^2+2z+10)&2(z-4)(z-2)
\end{matrix}\right] 
\end{equation*}
\begin{equation*}
    D_0=\frac{1}{z^2+2z+10}\left[\begin{matrix}
2(z-2)(z-3)&-(z^2+2z+10)\\
3(z^2+2z+10)&(z-2)^2
\end{matrix}\right]
\end{equation*}

and
$$
N_1=\frac{1}{z^2+2z+15}\left[\begin{matrix}
	(z-1)(z+1)&6(z^2+2z+15)\\
	7(z^2+2z+15)&(z-8)(z-1)
\end{matrix}\right]
$$
$$
D_1=\frac{1}{z^2+2z+15}\left[\begin{matrix}
	(z-6)(z+2)&2(z^2+2z+15)\\
	-(z^2+2z+15)&3(z-5)(z-9)
\end{matrix}\right]
$$

There are many unstable poles when $\lambda$ varies on the interval $[0,1]$. In Fig.~\ref{beforestabilization4} we show the poles  of $p_\lambda$  as $\lambda$ varies from 0 to 1 at intervals of 0.1. Unlike the previous example, there are many complex poles in this example.

\begin{figure}[htp]
	\centering
	\includegraphics[width=1\linewidth]{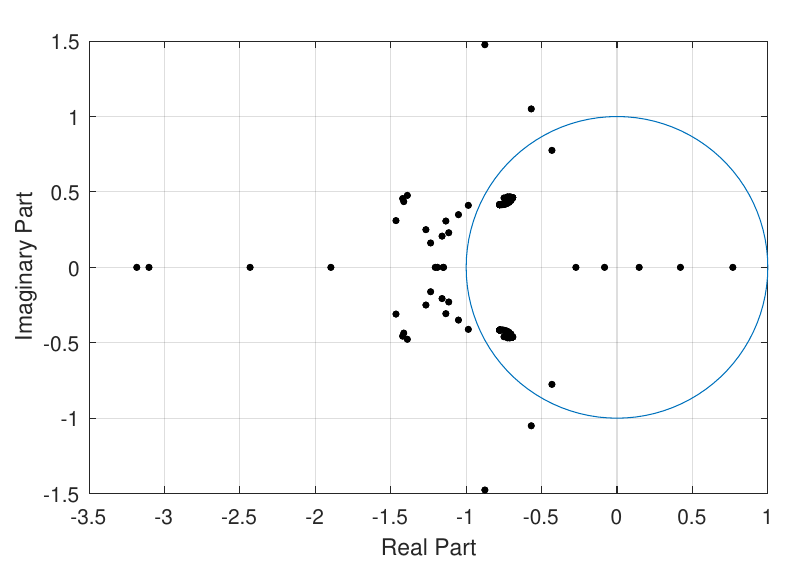}
	\caption{Poles of $P_\lambda$ before stabilization}
	\label{beforestabilization4}
\end{figure}

Using the method in this paper, we first observe that $\det(M(s))$ has three zeros at $s_1=0.2322$, $s_2=0.9862-3.5291i$, $s_3=0.9862+3.5291i$  in $\mathbb{C_+}$. To make the systems stable, we therefore need the interpolation conditions \eqref{delta}, which yield
\begin{equation}
    \begin{split}
        (\Delta_0(s_1))^{-1}&\Delta_1(s_1)=M_1,\\
        (\Delta_0(s_2))^{-1}\Delta_1(s_2)=M_2,&
(\Delta_0(s_3))^{-1}\Delta_1(s_3)=M_3
    \end{split}
\end{equation}
using the M{\"o}bius transformation \eqref{mob} and  since the  Hermitian  parts of $M_1^{1/2},M_2^{1/2},M_3^{1/2}$ are positive,
the problem is then reduced to  the analytic interpolation problem with the  interpolation conditions 
\begin{equation}
\begin{split}
    (\Delta_0^{-1}\Delta_1)^{1/2}(\frac{1-s_1}{1+s_1})&=(M_1)^{1/2}\\
    (\Delta_0^{-1}\Delta_1)^{1/2}(\frac{1-s_2}{1+s_2})&=(M_2)^{1/2}\\(\Delta_0^{-1}\Delta_1)^{1/2}(\frac{1-s_3}{1+s_3})&=(M_3)^{1/2}
\end{split}
\end{equation}

Here we choose 
\begin{equation}
\Sigma = \begin{bmatrix} 0.4 & 0.2 \\ 0.3 & -0.5 \\ 0.8 & -0.1 \\ 0.6 & -0.2 \end{bmatrix}.
\end{equation}

Then we get the result
\begin{equation}
    (\Delta_0(s)^{-1}\Delta_1(s))^{1/2}=\begin{bmatrix}
    K_{11}&K_{12}\\
    K_{21}&K_{22}
\end{bmatrix}
\end{equation}
$$K_{11}=\frac{s^4 + 0.923 s^3 + 0.6752 s^2 + 0.1567 s + 0.02193}{s^4 + 0.3846 s^3 + 0.6211 s^2 + 0.1852 s + 0.0186}$$
$$K_{12}=\frac{-0.325 s^4 + 1.821 s^3 + 0.3929 s^2 - 0.0064 s - 0.0111}{s^4 + 0.3846 s^3 + 0.6211 s^2 + 0.1852 s + 0.0186}$$
$$K_{21}=\frac{-0.5581 s^4 + 0.9026 s^3 - 0.2138 s^2 - 0.01 s - 0.03345}{s^4 + 0.3846 s^3 + 0.6211 s^2 + 0.1852 s + 0.0186}$$
$$K_{22}=\frac{0.2714 s^4 + 3.214 s^3 + 1.429 s^2 + 0.1429 s + 0.017}{s^4 + 0.3846 s^3 + 0.6211 s^2 + 0.1852 s + 0.0186}$$

The poles of \eqref{transfer_m} where $\lambda$ varying from 0 to 1 at intervals of 0.1 are shown in Fig.~\ref{afterstabilization4}. Because all poles are in $\mathbb{D}$,  all feedback systems are stable.

\begin{figure}[htp]
	\centering
	\includegraphics[width=1\linewidth]{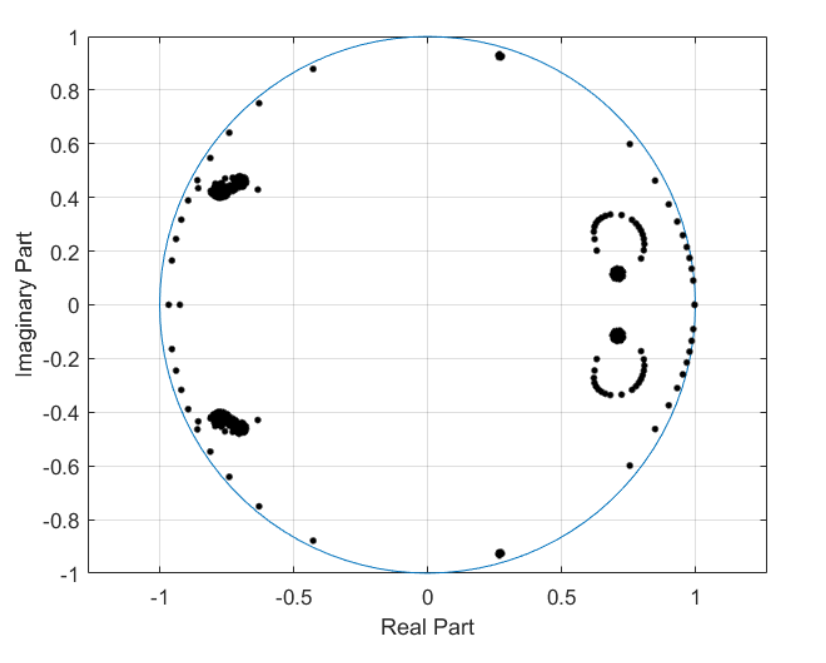}
	\caption{Poles of feedback systems after stabilization}
	\label{afterstabilization4}
\end{figure}

\section{conclusion}\label{sec:conclusion}

In this paper, we have explored the domain of simultaneous stabilization in control systems engineering, addressing both scalar and multivariable scenarios with a unified approach. Our investigation has led to the formulation of necessary and sufficient conditions for achieving stability across multiple plants using a single controller. By adapting Ghosh's interpolation framework to incorporate derivative constraints and employing a novel Riccati-type matrix equation approach, we have not only expanded the theoretical understanding of simultaneous stabilization but also provided a practical method for parameterizing solutions. Through a series of detailed numerical examples, we have demonstrated the efficacy and versatility of our method. In future work we shall allow derivative constraints for the multivariable scenario and consider a set of plants with different formats rather than linear combination of two distinct plants.

\bibliographystyle{IEEEtran}

\begin{thebibliography}{99}
\bibitem{Blondel}
Saadatjoo F, Derhami V, Karbassi S M. Simultaneous control of linear systems by state feedback[J]. Computers \& Mathematics with Applications, 2009, 58(1): 154-160.
\bibitem{Nett}
Chen H B, Chow J H, Kale M A, et al. Simultaneous stabilization using stable system inversion[J]. Automatica, 1995, 31(4): 531-542.
\bibitem{Chaoui}
Luke R A, Dorato P, Abdallah C T. A survey of state feedback simultaneous stabilization techniques[C]//Proceedings of 2nd World Automation Congress. 1996.
\bibitem{Smith}
Suryanarayanan S, Tomizuka M, Suzuki T. Design of simultaneously stabilizing controllers and its application to fault-tolerant lane-keeping controller design for automated vehicles[J]. IEEE Transactions on Control Systems Technology, 2004, 12(3): 329-339.
\bibitem{Wang}
Minto K D. Towards simultaneous performance: application of simultaneous stabilization techniques to helicopter engine control[C]//1988 American Control Conference. IEEE, 1988: 852-859.
\bibitem{Minto}
Wang Y, Miao Z, Zhong H, et al. Simultaneous stabilization and tracking of nonholonomic mobile robots: A Lyapunov-based approach[J]. IEEE Transactions on Control Systems Technology, 2015, 23(4): 1440-1450.
\bibitem{Saif}
Pushpangathan J V, Kandath H, Sundaram S, et al. Robust simultaneously stabilizing decoupling output feedback controllers for unstable adversely coupled nano air vehicles[J]. IEEE Transactions on Systems, Man, and Cybernetics: Systems, 2020, 52(2): 1003-1013.

\bibitem{Vidyasagar}
Vidyasagar M, Viswanadham N. Algebraic design techniques for reliable stabilization[J]. IEEE Transactions on Automatic Control, 1982, 27(5): 1085-1095.
\bibitem{Vidyasagar1}
Vidyasagar M, Schneider H, Francis B. Algebraic and topological aspects of feedback stabilization[J]. IEEE transactions on Automatic Control, 1982, 27(4): 880-894.
\bibitem{Youla}
Youla D C, Bongiorno Jr J J, Lu C N. Single-loop feedback-stabilization of linear multivariable dynamical plants[J]. Automatica, 1974, 10(2): 159-173.

\bibitem{Saeks}
Saeks R, Murray J. Fractional representation, algebraic geometry, and the simultaneous stabilization problem[J]. IEEE Transactions on Automatic Control, 1982, 27(4): 895-903.
\bibitem{Werner}
Werner H, Furuta K. Simultaneous stabilization based on output measurement[J]. Kybernetika, 1995, 31(4): 395-411.

\bibitem{Blondel1}
Blondel V. Simultaneous stabilization of linear systems[M]. Heidelberg: Springer, 1994.
\bibitem{Xu}
Xu S, Yang C. An algebraic approach to the robust stability analysis and robust stabilization of uncertain singular systems[J]. International Journal of Systems Science, 2000, 31(1): 55-61.

\bibitem{CL_simul}
Cui Y, Lindquist A. A parameterized solution to the simultaneous stabilization problem[C]//2023 IEEE 62th Conference on Decision and Control (CDC). IEEE, 2023.

\bibitem{Ghosh}
Ghosh B K. Simultaneous partial pole placement: A new approach to multimode system design[J]. IEEE transactions on automatic control, 1986, 31(5): 440-443.
\bibitem{Ghosh2}
Ghosh B K. An approach to simultaneous system design. Part II: Nonswitching gain and dynamic feedback compensation by algebraic geometric methods[J]. SIAM journal on control and optimization, 1988, 26(4): 919-963.
\bibitem{Ghosh3}
Ghosh B K. Transcendental and interpolation methods in simultaneous stabilization and simultaneous partial pole placement problems[J]. SIAM journal on control and optimization, 1986, 24(6): 1091-1109.

\bibitem{CLccdc} 
Cui Y, Lindquist A. A modified Riccati approach to analytic interpolation with applications to system identification and robust control[C]//2019 Chinese Control And Decision Conference (CCDC). IEEE, 2019: 677-684.
\bibitem{CLtac}
Cui Y, Lindquist A. The covariance extension equation: A riccati-type approach to analytic interpolation[J]. IEEE Transactions on Automatic Control, 2021, 67(11): 5825-5840.
\bibitem{CLcdc}
Cui Y, Lindquist A. Multivariable analytic interpolation with complexity constraints: A modified Riccati approach[C]//2019 IEEE 58th Conference on Decision and Control (CDC). IEEE, 2019: 764-770.

\bibitem{LPbook}
Lindquist A and Picci G., {\em Linear stochastic systems: A Geometric Approach to Modeling, Estimation and Identification}, Springer, 2015.


\bibitem{b13} 
Lindquist A and Picci G. Canonical correlation analysis, approximate covariance extension, and identification of stationary time series, {\em Automatica}, 1996, 32(5): 709-733.

\bibitem{b2} 
Byrnes C I, Georgiou T T and Lindquist A, A new approach to spectral estimation: A tunable high-resolution spectral estimator, {\em IEEE Trans. Signal Proc.}, 2000, 48: 3189-3205.

\bibitem{BGuL}
Byrnes C I, Gusev S V, Lindquist A. A convex optimization approach to the rational covariance extension problem[J]. SIAM Journal on Control and Optimization, 1998, 37(1): 211-229.

\bibitem{Olver}
Olver P J. Applications of Lie groups to differential equations[M]. Springer Science \& Business Media, 1993.

\bibitem{b1} 
Byrnes C I, Georgiou T T, Lindquist A. A generalized entropy criterion for Nevanlinna-Pick interpolation with degree constraint[J]. IEEE Transactions on Automatic Control, 2001, 46(6): 822-839.
\bibitem{BLpartial}
Byrnes C I, Lindquist A. On the partial stochastic realization problem[J]. IEEE Transactions on Automatic Control, 1997, 42(8): 1049-1070.
\bibitem{b6}
Lindquist A, Partial Realization Theory and System Identification Redux, {\em Proc. 11th Asian Control Conference}, Gold Coast, Australia,  2017, pp. 1946-1950.



\end{thebibliography}

\end{document}